\documentclass[11pt]{article}
\usepackage{amsfonts,amssymb,amsmath,amsthm}

\usepackage{bm}
\usepackage{palatino}
\usepackage{mathpazo}
\usepackage{inconsolata}
\usepackage[hidelinks]{hyperref}

\usepackage[dvipsnames]{xcolor}
\usepackage{tikz}
\usetikzlibrary{arrows.meta} 
\usetikzlibrary{decorations.markings}
\usetikzlibrary{automata, positioning}
\usetikzlibrary{decorations.pathmorphing}
\usepackage{tabularray}
\UseTblrLibrary{booktabs}

\tikzset{
  CircleTip/.tip={Circle[open,fill=OliveGreen,length=5pt]},
  SquareTip/.tip={Square[open,fill=BrickRed,length=4.5pt]},
  DiamondTip/.tip={Turned Square[open,fill=MidnightBlue,length=6pt]},
  t1/.style={
    OliveGreen,
    ultra thick
  },
  circ/.style={
  	shorten >=-2.5pt, shorten <=-2.5pt,
    CircleTip-CircleTip,
  },
  t2/.style={
    BrickRed,
    ultra thick
  },
  sq/.style={
	shorten >=-2.25pt, shorten <=-2.25pt, 
	SquareTip-SquareTip,
  },
  t3/.style={
    MidnightBlue,
    ultra thick
  },
  diam/.style={
	shorten >=-3pt, shorten <=-3pt, 
	DiamondTip-DiamondTip,
  },
  a1/.style={
  	decoration={
	    markings,
    	mark=at position 0.63 with {\arrow[line width=1pt]{Computer Modern Rightarrow}}
    },
    postaction={decorate},
  },
  a2/.style={
  	decoration={
	    markings,
    	mark=at position 0.71 with {\arrow[line width=1pt]{Computer Modern Rightarrow[]Computer Modern Rightarrow[]}}
    },
    postaction={decorate},
  },
  a3/.style={
  	decoration={
	    markings,
    	mark=at position 0.79 with {\arrow[line width=1pt]{Computer Modern Rightarrow[]Computer Modern Rightarrow[]Computer Modern Rightarrow[]}}
    },
    postaction={decorate}
  }
}
\newcommand{\graydots}[1]{
	\foreach \x/\y in {#1}{
		\draw[gray, fill=white] (\x,\y) circle (1.9pt);
	}
}

\usepackage{enumitem}

\begingroup
    \makeatletter
    \@for\theoremstyle:=definition,remark,plain\do{%
        \expandafter\g@addto@macro\csname th@\theoremstyle\endcsname{%
            \addtolength\thm@preskip\parskip
            }%
        }
\endgroup

\usepackage[margin=1in]{geometry}

\theoremstyle{definition}
\newtheorem{theorem}{Theorem}

\newtheorem{definition}{Definition}

\numberwithin{theorem}{section}
\numberwithin{question}{section}
\numberwithin{definition}{section}
\numberwithin{example}{section}
\numberwithin{remark}{section}

\pagestyle{plain}

\newcommand*\patchAmsMathEnvironmentForLineno[1]{%
  \expandafter\let\csname old#1\expandafter\endcsname\csname #1\endcsname
  \expandafter\let\csname oldend#1\expandafter\endcsname\csname end#1\endcsname
  \renewenvironment{#1}%
     {\linenomath\csname old#1\endcsname}%
     {\csname oldend#1\endcsname\endlinenomath}}%
\newcommand*\patchBothAmsMathEnvironmentsForLineno[1]{%
  \patchAmsMathEnvironmentForLineno{#1}%
  \patchAmsMathEnvironmentForLineno{#1*}}%
\AtBeginDocument{%
\patchBothAmsMathEnvironmentsForLineno{equation}%
\patchBothAmsMathEnvironmentsForLineno{align}%
\patchBothAmsMathEnvironmentsForLineno{flalign}%
\patchBothAmsMathEnvironmentsForLineno{alignat}%
\patchBothAmsMathEnvironmentsForLineno{gather}%
\patchBothAmsMathEnvironmentsForLineno{multline}%
}

\usepackage{parskip}
\usepackage{lineno}
\usepackage[small]{titlesec}


\usepackage[square,comma,numbers,sort&compress]{natbib}

\usepackage{color}
%

\newcommand{\ds}{\displaystyle}
\newcommand{\N}{\mathbb{N}}
\newcommand{\Z}{\mathbb{Z}}
\newcommand{\GG}{\mathcal{G}}
\renewcommand{\SS}{\mathcal{S}}
\newcommand{\WW}{\mathcal{W}}

\DeclareMathOperator{\wt}{wt}



\renewenvironment{abstract}{
	\begin{list}{}%
	{\setlength{\rightmargin}{1in}%
	\setlength{\leftmargin}{1in}}%
	\item[]\ignorespaces\begin{small}}%
	{\end{small}\unskip\end{list}%
}

\newpagestyle{main}[\small]{
	\headrule
	\sethead[\usepage][][]
	{}{}{\usepage}
}

\setlength{\parindent}{0pt}
\setlength{\parskip}{1.5ex}

\title{Exactly-Solvable Self-Trapping Lattice Walks. II. Lattices of Arbitrary Height.}

\usepackage{makecell}
\author{
	\begin{tabular}{m{2.5in}m{2.5in}}
		\makecell{
			Jay Pantone\\
			\small Department of Mathematical\\
			\small and Statistical Sciences\\
			\small Marquette University\\
			\small Milwaukee, WI, USA\\
			\small \texttt{jay.pantone@marquette.edu}
		}&
		\makecell{
			Alexander R. Klotz\\
			\small Department of Physics and Astronomy\\
			\small California State University, Long Beach\\
			\small Long Beach, CA, USA\\
			\small \texttt{alex.klotz@csulb.edu}
		}
		\\\ \\
		\multicolumn{2}{c}{\makecell{
			Everett Sullivan\\
			\small Department of Mathematics\\
			\small Clayton State University\\
			\small Morrow, GA, USA\\
			\small \texttt{EverettSullivan@clayton.edu}
		}}
	\end{tabular}
}

\titleformat{\section}{\large}{\thesection.}{1em}{}
\date{}


\usepackage{arydshln}
\usepackage{makecell}

\begin{document}
\maketitle

\begin{abstract}
	A growing self-avoiding walk (GSAW) is a walk on a graph that is directed, does not visit the same vertex twice, and has a trapped endpoint. We show that the generating function enumerating GSAWs on a half-infinite strip of finite height is rational, and we give a procedure to construct a combinatorial finite state machine that allows one to compute this generating function. We then modify this procedure to compute generating functions for GSAWs under two probabilistic models. We derive the mean trapping lengths for GSAWs in strips of height 2 to 5 to gain insight into the empirically known square lattice result of 71 steps. Finally, we prove that the generating functions for Greek key tours (GSAWs on a finite grid that visit every vertex) on a half-infinite strip of fixed height are also rational, allowing us to resolve several conjectures.
\end{abstract}

\tableofcontents


\section{Growing Self-Avoiding Walks}

A \emph{self-avoiding walk} (SAW) is a walk on a graph that does not visit the same vertex twice~\cite{madras:self-avoiding-walk-book}. Self-avoiding walks are used to model polymer chains in good solvent conditions, and exact enumerations of SAWs provide insight into scaling exponents that can be measured in polymer experiments~\cite{clisby:saws-lace}. Typically, each SAW of a given length is treated as equally probable such that the average size of a polymer chain, which is able to adopt these equally likely configurations through thermal fluctuations, may be computed. In contrast, a \emph{growing self-avoiding walk} (GSAW) begins at the origin and grows on a lattice one step at a time, adding adjacent previously unoccupied sites to the end of the walk according to a probability distribution. When the walk reaches a site that has no unoccupied neighbors, it is said to become trapped and the walk terminates. The GSAW may be used to model polymers whose polymerization rate is much greater than their relaxation time. The most famous GSAW result used Monte Carlo simulations to show that on a square lattice, the mean number of steps taken by a GSAW before it gets trapped is approximately 71~\cite{hemmer:saw-average-71}, with a positively skewed distribution. Other work has computed trapping statistics for other lattices in two and three dimensions~\cite{renner:saw-heteropolymers}, incorporated the effects of self-attraction to study poor-solvent polymer interactions~\cite{hooper:trapping-saws}, and examined an off-lattice model of a semiflexible polymer GSAW~\cite{arakawa:root-mean-squares}. Beyond polymers, recent experiments have shown that droplets on a cold hydrophobic surface can transport themselves while depleting solvent from the surface, creating a trail in their wake that they avoid, in a physical realization of GSAW-like behavior~\cite{lin:emergent-collection-motion}. While the 71-step trapping length is a well-known result, it is purely empirical. The primary goal of this manuscript, along with Part 1~\cite{klotz:gsaws1}, is to develop an exactly solvable model of GSAW trapping to provide insight into this value.

This manuscript develops an approach to exactly computing the mean trapping lengths of GSAWs in finite-height square lattices. In addition to computing the trapping length, it also determines the mean \emph{displacement} of the walk, the difference between the maximum and minimum reached in the $x$-direction. In the study of confined polymers, particularly with regard to genomic-length DNA in nanochannels, this parameter can be readily measured by fluorescence imaging of the molecules and predicted from polymer scaling models~\cite{reisner:dna}. This parameter is of particular importance to nanochannel-based genomic mapping technologies, for which knowing the relationship between spatial and genetic coordinates is crucial~\cite{sheats:measurements}. An exactly solvable random walk model, the one-dimensional Domb-Joyce model, has been used to predict the mean extension of a DNA molecule in a nanochannel in the ``extended de Gennes'' regime~\cite{werner:confined}. Here, we develop another set of exactly-solved results for the displacement of GSAWs.

A \emph{grid graph} is a graph whose vertices are the integer points on the Cartesian plane, or a subset thereof, with edges between any two points whose distance is $1$. We denote by $\GG_{S}$ the grid graph whose vertices are the subset $S$ of integer coordinates. We use $\GG_h$ as short-hand for $\GG_{\N \times \{0, \ldots, h-1\}}$, the half-infinite grid graph with height $h$. Figure~\ref{figure:gg-examples} shows $\GG_{\{0, 1, 2, 3\} \times \{0, 1, 2\}}$, a grid graph with $12$ vertices, and $\GG_4 = \GG_{\N \times \{0, 1, 2, 3\}}$.

GSAWs can be studied with a probabilistic viewpoint. Suppose that at each step in a walk, the next vertex visited is chosen from among the unoccupied neighbors of the current endpoint according to some probability distribution. This implies that for each GSAW $W$ with starting vertex $v$ there is a probability $p(W)$ that a walk starting at $v$ will evolve into $W$ before becoming trapped. For example, if we consider a walk in which we choose among the unoccupied neighbors with uniform probability, then for the walk $W$ in Figure~\ref{figure:first-prob-example}, we have $p(W) = 1/576$.

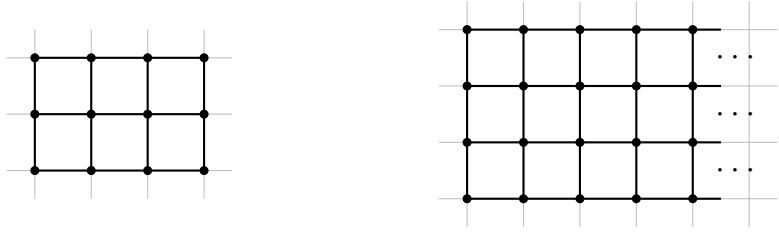
\begin{figure}
	\begin{center}
		\begin{tikzpicture}[scale=0.75, baseline=(current bounding box.center)]
			\draw[thin, lightgray] (-0.5, -0.5) grid (3.5, 2.5);
			\foreach \x in {0, 1, 2, 3}{
				\foreach \y in {0, 1, 2}{
					\draw[black, fill] (\x, \y) circle (2pt);
				}
			}
			\draw[thick] (0,0) -- (3,0);
			\draw[thick] (0,1) -- (3,1);
			\draw[thick] (0,2) -- (3,2);
			\draw[thick] (0,0) -- (0,2);
			\draw[thick] (1,0) -- (1,2);
			\draw[thick] (2,0) -- (2,2);
			\draw[thick] (3,0) -- (3,2);
		\end{tikzpicture}
		\hspace{1in}
		\begin{tikzpicture}[scale=0.75, baseline=(current bounding box.center)]
			\draw[thin, lightgray] (-0.5, -0.5) grid (5.5, 3.5);
			\foreach \x in {0, 1, 2, 3, 4}{
				\foreach \y in {0, 1, 2, 3}{
					\draw[black, fill] (\x, \y) circle (2pt);
				}
			}
			\draw[thick] (0,0) -- (4.5,0);
			\draw[thick] (0,1) -- (4.5,1);
			\draw[thick] (0,2) -- (4.5,2);
			\draw[thick] (0,3) -- (4.5,3);
			\draw[thick] (0,0) -- (0,3);
			\draw[thick] (1,0) -- (1,3);
			\draw[thick] (2,0) -- (2,3);
			\draw[thick] (3,0) -- (3,3);
			\draw[thick] (4,0) -- (4,3);
			\node at (4.8, 2.5) {\large $\cdots$};
			\node at (4.8, 1.5) {\large $\cdots$};
			\node at (4.8, 0.5) {\large $\cdots$};
		\end{tikzpicture}
	\end{center}
	\caption{On the left, the finite grid graph $\GG_{\{0, \ldots, 3\} \times \{0, \ldots, 2\}}$. On the right, the half-infinite grid graph $\GG_4$.}
	\label{figure:gg-examples}
\end{figure}

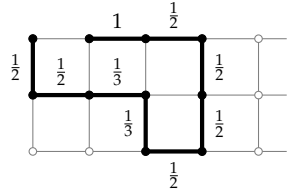
\begin{figure}
	\begin{center}
		\begin{tikzpicture}[
			scale=0.75,
			baseline=(current bounding box.center),
		]
			\draw[gray] (0,0) grid (4.5,2);
			\foreach \x in {0,1,2,3,4} { \foreach \y in {0,1,2} {
				\draw[gray, fill=white] (\x,\y) circle (1.9pt);
			}}
			
			\def\points{(0,2),(0,1),(1,1),(2,1),(2,0),(3,0),(3,1),(3,2),(2,2),(1,2)}

			\draw[ultra thick] (0,2)
				\foreach \point in \points {
					-- \point
				};
			\foreach \point in \points {
				\draw[black, fill] \point circle (2pt);
			}
			\node[left] at (0, 1.5) {\scriptsize $\frac{1}{2}$};
			\node[above] at (0.5, 1) {\scriptsize $\frac{1}{2}$};
			\node[above] at (1.5, 1) {\scriptsize $\frac{1}{3}$};
			\node[left] at (2, 0.5) {\scriptsize $\frac{1}{3}$};
			\node[below] at (2.5, 0) {\scriptsize $\frac{1}{2}$};
			\node[right] at (3, 0.5) {\scriptsize $\frac{1}{2}$};
			\node[right] at (3, 1.5) {\scriptsize $\frac{1}{2}$};
			\node[above] at (2.5, 2) {\scriptsize $\frac{1}{2}$};
			\node[above] at (1.5, 2) {\scriptsize $1$};
  		\end{tikzpicture}
	\end{center}
	\caption{A GSAW on $\GG_3$ that starts at $(0,2)$ and ends trapped at $(1,2)$. The probability of each edge is shown. The probability that this GSAW occurs on $\GG_3$ is the product of the edge probabilities, $1/576$.}
	\label{figure:first-prob-example}
\end{figure}

In the prequel to this work, Klotz and Sullivan~\cite{klotz:gsaws1} studied GSAWs on the doubly-infinite height two grid graph $\GG_{\Z \times \{0,1\}}$ and a similar triangular lattice of height two. For any graph $G$ we use $W_v(G)$ to denote the set of GSAWs on $G$ starting at vertex $v$, and for a walk $W$ we use $|W|$ to denote the length of $W$ (the number of edges) and when $\GG$ is a grid graph we use $\|W\|$ to denote the displacement of $W$ (the difference between the largest and smallest $x$-coordinates of vertices involved in $W$).

Using direct combinatorial methods, Klotz and Sullivan computed, for each of the two different height-two grid graphs $G$ and for two different probability measures, the generating function
\[
	f^G_v(x, y) = \sum_{W \in W_v(G)} p(W)x^{|W|}y^{\|W\|}
\]
that counts walks in $W_v(G)$ according to their length and displacement and weighted by their probability. Observe that we may also write
\[
	f^G_v(x, y) = \ds\sum_{n,k \geq 0} p_{n,k}x^ny^k
\]
where $p_{n,k}$ is the sum of the probabilities of all GSAWs on $G$ with starting vertex $v$ that have $n$ edges and displacement $k$. From each generating function, Klotz and Sullivan computed a number of properties such as the expected length and expected displacement of a GSAW.

In this work, we show that one can construct finite state machines that generate GSAWs on half-infinite grid graphs of any height, i.e., any graph $\GG_h$, and we provide a Python implementation to do so. As a result, the generating function $f_v^G$ for any such $G$ and any vertex $v$ is guaranteed to be rational, and can be algorithmically computed in finite time, allowing us to significantly extend the results of Klotz and Sullivan to these graphs\footnote{The methods we describe can be extended to the fully-infinite case $\GG_{\Z \times \{0, \ldots, h-1\}}$ in a straight-forward but difficult manner.}. With a small modification, we also use the finite state machine approach to answer questions posed by Johnston~\cite{johnston:greek-key-blog} about SAWs on finite grid graphs that visit every vertex.

There is substantial literature on the topic of describing various types of self-avoiding walks through finite state machines and transfer matrices. We discuss this prior literature, as well as summarize our main results, in Section~\ref{section:prev-lit}. We further discuss there why our approach differs significantly from those in the literature. Following this, we show how to construct finite state machines for the non-probabilistic version of this problem in Section~\ref{section:fsm-non-prob}. We use these constructions in Section~\ref{section:non-prob-results} to derive enumerative results about GSAWs on half-infinite grid graphs of any height. Sections~\ref{section:fsm-prob} and~\ref{section:prob-results} demonstrate how to adapt the construction to incorporate each of the two probability distributions considered by Klotz and Sullivan~\cite{klotz:gsaws1} and then again derive enumerative results about GSAWs considered under these distributions. Section~\ref{section:full-GSAWs} uses a small modification of our approach to answer questions of Jensen~\cite{jensen:compact-saws} and Johnston~\cite{johnston:greek-key-blog}. 

We conclude in Section~\ref{section:conclusion} by posing questions about GSAWs on other grid graphs and discussing how a more general finite state machine approach may be possible. Finally, Appendix~\ref{subsection:monte-carlo} reports on the results of Monte Carlo simulations of GSAWs on various grid graphs.

\section{Prior Literature and Summary of Main Results}
\label{section:prev-lit}
In this section we summarize the main results of our paper in the context of previous literature on the topic. In this work, we develop a system of finite state machines that allows us to compute generating functions for the trapping length and displacement distribution of growing self-avoiding walks in semi-infinite square lattice strips. From these generating functions we are able to exactly compute the mean trapping length of GSAWs in strips of height 2, 3, 4, and 5, for which the results are 13, $\approx19.32$, $\approx22.98$, and $\approx26.52$ respectively. From these exact results we can extrapolate that mean trapping length in the quarter-infinite plane is approximately 45.8, which is close to the Monte Carlo estimate of 45.4. We believe this is a significant step towards understanding the classic 71 result of Hemmer and Hemmer \cite{hemmer:saw-average-71}. We are also able to compute the expected displacement of trapped walks, which are 7,  $\approx7.75$, $\approx7.83$, and $\approx8.08$ over the same range of heights. This height-displacement relation may be thought of as analogous to exact solutions of nanoconfined polymer problems \cite{werner:confined}.

The tools we developed to arrive at these numbers can be used to answer other related questions regarding confined SAWs and GSAWs. Recently, Laforge, Mikayelyan, and Senet~\cite{laforge:trapping} extended the work of Hooper and Klotz~\cite{hooper:trapping-saws} to study repulsive walks on square and honeycomb lattices. They found that the trapping length reaches a plateau with increasing repulsion strength (the bias in the step probability distribution against sites adjacent to those already occupied) for both lattices. We have developed an ``energetic model'' which incorporates a cost or benefit of nearest-neighbor contacts into the walk probability distribution. In the limit of repulsive walks ($C=0$ in our notation), we can exactly compute the plateau of the trapping length. For lattices larger than 2 we can exactly predict the trapping at $C=0$, finding it is $3867/112\approx34.5$ for $h=3$, and approximately $33.8$ for $h=4$ and $38.9$ for $h=5$, compared to the unconfined square lattice value of approximately $178.5$, or $107.9\pm0.3$ in the quarter-infinite plane. 

We are also able to use our finite state machines to investigate the enumerations of what have variously been called Hamiltonian walks, Greek key tours, or maximal length GSAWs: walks that visit every lattice site up to a certain width. We are able to exactly enumerate these up to strips of height 8. Previously, Jensen~\cite{jensen:compact-saws} computed enumerations of Hamiltonian paths in finite-width strips, which are equivalent to the Greek Keys we study in Section~\ref{section:full-GSAWs}. Jensen is able to arrive at numerical estimates of the exponential growth rates, which we are able to calculate exactly from our generating functions.

Going beyond our specific results, our work adds to the body of work on enumeration of self-avoiding walks, and in some cases we can provide exact solution where only numerical estimates existed previously. There has been significant work focusing on the enumeration of self-avoiding walks in confining geometries. Much of this work uses finite state machine and transfer matrix methods, as do we, but two key differences are that our focus is on probabilistic models and that most earlier literature is not centered on GSAWs or trapping. As a result, our work requires a more complicated notion of ``state''---what we call a width-2 window for the uniform model and a width-4 window for the energetic model---in order for the probabilities of each transition to be computed. 

Having made these caveats, we will now describe the similar work that predates ours. Wall, Seitz, Chin, and Mandel~\cite{wall:saws} computed the average span of SAWs in strips of width two and three using straightforward transfer matrix methods, and Klein~\cite{klein:saws} extended this to study walks in strips bound on both ends. Klein also computed connective constants for the enumerations of confined walks that are the same as those we find for GSAWs in strips of the same width. This provides support to the conjecture that the universality class of GSAWs is the same as that of SAWS, which is difficult to determine from examining the asymptotic behavior of large unconfined walks. A number of studies that focus on enumeration of unconfined walks use walks in strips as a transitional step. These include Conway, Enting, and Guttmann~\cite{conway:saws} who computed the number of walks up to length 39, and arrived at enumerations of confined walks as a transitional step, as well as Clisby and Jensen~\cite{clisby:new-transfer}. Most recently, Dangovski and Lalov~\cite{dangovski:saws} arrived numerically at similar connective constants that we derive exactly.


\section{The Non-Probabilistic Finite State Machine Construction}
\label{section:fsm-non-prob}

\subsection{The Frames of a GSAW}
\label{subsection:frames}

Figure~\ref{figure:frame-ex-1} shows a GSAW that starts at the vertex $(0, 4)$, has length $65$, and has displacement $14$. The finite state machine construction that we will describe in this section builds GSAWs in a left-to-right manner by considering at each step a \emph{frame} of width 2 containing only the edges of the GSAW that are fully contained in that frame. The leftmost frame, the one containing columns $0$ and $1$, of the GSAW from Figure~\ref{figure:frame-ex-1} is shown on the left in Figure~\ref{figure:frame-ex-2}. It contains the seven edges from the GSAW that have both endpoints with $x$-coordinates $0$ or $1$, and we call this \emph{Frame 0}. Frame 1 contains the seven edges whose endpoints both have $x$-coordinate $1$ or $2$, Frame 2 contains the four edges whose endpoints both have $x$-coordinates $2$ or $3$, and so on. 

\begin{figure}
	\begin{center}
		\begin{tikzpicture}[
			scale=0.75,
			baseline=(current bounding box.center),
		]
			\draw[thin, gray] (0,0) grid (15.5,4);
			\foreach \x in {0,...,15} { \foreach \y in {0,1,2,3,4} {
				\draw[gray, fill=white] (\x,\y) circle (1.9pt);
			}}
			
			\def\points{(0,4),(1,4),(2,4),(3,4),(4,4),(4,3),(4,2),(4,1),(3,1),(2,1),(1,1),(1,2),(2,2),(2,3),(1,3),(0,3),(0,2),(0,1),(0,0),(1,0),(2,0),(3,0),(4,0),(5,0),(6,0),(7,0),(7,1),(7,2),(7,3),(6,3),(6,2),(5,2),(5,3),(5,4),(6,4),(7,4),(8,4),(9,4),(9,3),(9,2),(10,2),(11,2),(12,2),(13,2),(13,3),(12,3),(11,3),(11,4),(12,4),(13,4),(14,4),(14,3),(14,2),(14,1),(14,0),(13,0),(12,0),(12,1),(11,1),(10,1),(10,0),(9,0),(9,1),(8,1),(8,2),(8,3)}

			\foreach \point in \points {
				\draw[black, fill] \point circle (2.3pt);
			}
			
			\draw[line width=2pt] (0,4) -- (1,4) -- (2,4) -- (3,4) -- (4,4) -- (4,3) -- (4,2) -- (4,1) -- (3,1) -- (2,1) -- (1,1) -- (1,2) -- (2,2) -- (2,3) -- (1,3) -- (0,3) -- (0,2) -- (0,1) -- (0,0) -- (1,0) -- (2,0) -- (3,0) -- (4,0) -- (5,0) -- (6,0) -- (7,0) -- (7,1) -- (7,2) -- (7,3) -- (6,3) -- (6,2) -- (5,2) -- (5,3) -- (5,4) -- (6,4) -- (7,4) -- (8,4) -- (9,4) -- (9,3) -- (9,2) -- (10,2) -- (11,2) -- (12,2) -- (13,2) -- (13,3) -- (12,3) -- (11,3) -- (11,4) -- (12,4) -- (13,4) -- (14,4) -- (14,3) -- (14,2) -- (14,1) -- (14,0) -- (13,0) -- (12,0) -- (12,1) -- (11,1) -- (10,1) -- (10,0) -- (9,0) -- (9,1) -- (8,1) -- (8,2) -- (8,3);
			
  		\end{tikzpicture}
	\end{center}
	\caption{A GSAW on $\GG_5$ with length $65$ and displacement $14$.}
	\label{figure:frame-ex-1}
\end{figure}

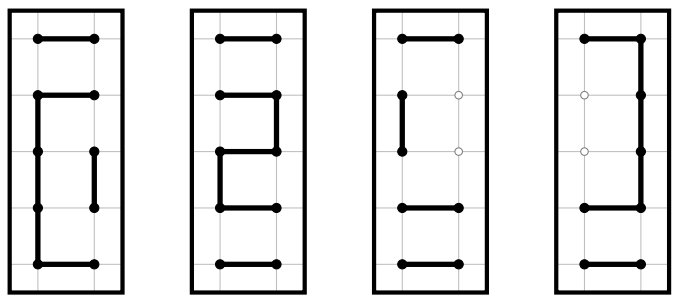
\begin{figure}
	\begin{center}
		\hfill
		\begin{tikzpicture}[
			scale=0.75,
			baseline=(current bounding box.center),
		]
			\draw[thin, lightgray] (-0.5,-0.5) grid (1.5,4.5);
			\foreach \x in {0,1} { \foreach \y in {0,1,2,3,4} {
				\draw[gray, fill=white] (\x,\y) circle (1.9pt);
			}}
			\draw[ultra thick] (-0.5,-0.5) rectangle (1.5,4.5);
			
			\def\points{(0,0),(0,1),(0,2),(0,3),(0,4),(1,0),(1,1),(1,2),(1,3),(1,4)};

			\foreach \point in \points {
				\draw[black, fill] \point circle (2.3pt);
			}

			\draw [line width=2pt] (0,4) -- (1,4);
			\draw [line width=2pt] (1,3) -- (0,3) -- (0,0) -- (1,0);		
			\draw [line width=2pt] (1,1) -- (1,2);
  		\end{tikzpicture}
  		\quad\quad
		\begin{tikzpicture}[
			scale=0.75,
			baseline=(current bounding box.center),
		]
			\draw[thin, lightgray] (-0.5,-0.5) grid (1.5,4.5);
			\foreach \x in {0,1} { \foreach \y in {0,1,2,3,4} {
				\draw[gray, fill=white] (\x,\y) circle (1.9pt);
			}}
			\draw[ultra thick] (-0.5,-0.5) rectangle (1.5,4.5);
			
			\def\points{(0,0),(0,1),(0,2),(0,3),(0,4),(1,0),(1,1),(1,2),(1,3),(1,4)};

			\foreach \point in \points {
				\draw[black, fill] \point circle (2.3pt);
			}

			\draw [line width=2pt] (0,4) -- (1,4);
			\draw [line width=2pt] (0,3) -- (1,3) -- (1,2) -- (0,2) -- (0,1) -- (1,1);
			\draw [line width=2pt] (0,0) -- (1,0);
  		\end{tikzpicture}
  		\quad\quad
		\begin{tikzpicture}[
			scale=0.75,
			baseline=(current bounding box.center),
		]
			\draw[thin, lightgray] (-0.5,-0.5) grid (1.5,4.5);
			\foreach \x in {0,1} { \foreach \y in {0,1,2,3,4} {
				\draw[gray, fill=white] (\x,\y) circle (1.9pt);
			}}
			\draw[ultra thick] (-0.5,-0.5) rectangle (1.5,4.5);
			
			\def\points{(0,0),(0,1),(0,2),(0,3),(0,4),(1,0),(1,1),(1,4)};

			\foreach \point in \points {
				\draw[black, fill] \point circle (2.3pt);
			}

			\draw [line width=2pt] (0,4) -- (1,4);
			\draw [line width=2pt] (0,2) -- (0,3);
			\draw [line width=2pt] (0,1) -- (1,1);
			\draw [line width=2pt] (0,0) -- (1,0);
					
  		\end{tikzpicture}
  		\quad\quad
		\begin{tikzpicture}[
			scale=0.75,
			baseline=(current bounding box.center),
		]
			\draw[thin, lightgray] (-0.5,-0.5) grid (1.5,4.5);
			\foreach \x in {0,1} { \foreach \y in {0,1,2,3,4} {
				\draw[gray, fill=white] (\x,\y) circle (1.9pt);
			}}
			\draw[ultra thick] (-0.5,-0.5) rectangle (1.5,4.5);
			
			\def\points{(0,0),(0,1),(0,4),(1,0),(1,1),(1,2),(1,3),(1,4)};

			\foreach \point in \points {
				\draw[black, fill] \point circle (2.3pt);
			}

			\draw [line width=2pt] (0,0) -- (1,0);
			\draw [line width=2pt] (0,1) -- (1,1) -- (1,4) -- (0,4);
					
  		\end{tikzpicture}
  		\hfill\ {}
	\end{center}
	\caption{The leftmost four frames of the GSAW in Figure~\ref{figure:frame-ex-1}.}
	\label{figure:frame-ex-2}
\end{figure}

We will say much more about finite state machines later, but the goal of the finite state machine approach is to discover a finite number of building blocks from which we may build all GSAWs by arranging these blocks in orders that we decide are valid. In order to determine if a certain building block can be used next, one may look only at the most recently used block, not any prior blocks. The frames that we have described above will serve as the base for our building blocks, but they are not sufficient on their own. Consider for example the four frames shown on the left in Figure~\ref{figure:frame-ex-3}. When concatenated together, they define the graph on the right in Figure~\ref{figure:frame-ex-3} that is clearly not a GSAW. The issue is that although each frame aligns with the next, they do not contain enough information for us to realize that when concatenating the rightmost frame to the first three, we will create a graph that contains a cycle.

\begin{figure}
	\begin{center}
		\hfill
		\begin{tikzpicture}[
			scale=0.75,
			baseline=(current bounding box.center),
		]
			\draw[thin, lightgray] (-0.5,-0.5) grid (1.5,4.5);
			\foreach \x in {0,1} { \foreach \y in {0,1,2,3,4} {
				\draw[gray, fill=white] (\x,\y) circle (1.9pt);
			}}
			\draw[ultra thick] (-0.5,-0.5) rectangle (1.5,4.5);
			
			\def\points{(0,1),(0,2),(0,3),(0,4),(1,0),(1,1),(1,2),(1,3),(1,4)};

			\foreach \point in \points {
				\draw[black, fill] \point circle (2.3pt);
			}

			\draw [line width=2pt] (0,4) -- (0,3) -- (1,3) -- (1,4);
			\draw [line width=2pt] (1,0) -- (1,1) -- (0,1) -- (0,2) -- (1,2);			
  		\end{tikzpicture}
  		\quad
  		\begin{tikzpicture}[
			scale=0.75,
			baseline=(current bounding box.center),
		]
			\draw[thin, lightgray] (-0.5,-0.5) grid (1.5,4.5);
			\foreach \x in {0,1} { \foreach \y in {0,1,2,3,4} {
				\draw[gray, fill=white] (\x,\y) circle (1.9pt);
			}}
			\draw[ultra thick] (-0.5,-0.5) rectangle (1.5,4.5);
			
			\def\points{(0,0),(0,1),(0,2),(0,3),(0,4),(1,0),(1,2),(1,4)};

			\foreach \point in \points {
				\draw[black, fill] \point circle (2.3pt);
			}

			\draw [line width=2pt] (0,1) -- (0,0) -- (1,0);
			\draw [line width=2pt] (1,2) -- (0,2);
			\draw [line width=2pt] (0,3) -- (0,4) -- (1,4);
  		\end{tikzpicture}
  		\quad
  		\begin{tikzpicture}[
			scale=0.75,
			baseline=(current bounding box.center),
		]
			\draw[thin, lightgray] (-0.5,-0.5) grid (1.5,4.5);
			\foreach \x in {0,1} { \foreach \y in {0,1,2,3,4} {
				\draw[gray, fill=white] (\x,\y) circle (1.9pt);
			}}
			\draw[ultra thick] (-0.5,-0.5) rectangle (1.5,4.5);
			
			\def\points{(0,0),(0,2),(0,4),(1,0),(1,2),(1,4)};

			\foreach \point in \points {
				\draw[black, fill] \point circle (2.3pt);
			}

			\draw [line width=2pt] (0,0) -- (1,0);
			\draw [line width=2pt] (0,2) -- (1,2);
			\draw [line width=2pt] (0,4) -- (1,4);
  		\end{tikzpicture}
  		\quad
  		\begin{tikzpicture}[
			scale=0.75,
			baseline=(current bounding box.center),
		]
			\draw[thin, lightgray] (-0.5,-0.5) grid (1.5,4.5);
			\foreach \x in {0,1} { \foreach \y in {0,1,2,3,4} {
				\draw[gray, fill=white] (\x,\y) circle (1.9pt);
			}}
			\draw[ultra thick] (-0.5,-0.5) rectangle (1.5,4.5);
			
			\def\points{(0,0),(0,2),(0,4),(1,0),(1,1),(1,2),(1,4)};

			\foreach \point in \points {
				\draw[black, fill] \point circle (2.3pt);
			}

			\draw [line width=2pt] (0,4) -- (1,4);
			\draw [line width=2pt] (0,0) -- (1,0) -- (1,1) -- (1,2) -- (0,2);			
  		\end{tikzpicture}
  		\hfill\hfill
		\begin{tikzpicture}[
			scale=0.75,
			baseline=(current bounding box.center),
		]
			\draw[gray] (0,0) grid (5.5,4);
			\foreach \x in {0,1,2,3,4,5} { \foreach \y in {0,1,2,3,4} {
				\draw[gray, fill=white] (\x,\y) circle (1.9pt);
			}}
			
			\def\points{(0,1),(0,2),(0,3),(0,4),(1,0),(1,1),(1,2),(1,3),(1,4),(2,0),(2,2),(2,4),(3,0),(3,2),(3,4),(4,0),(4,1),(4,2),(4,4)}

			\foreach \point in \points {
				\draw[black, fill] \point circle (2pt);
			}
			
			\draw[ultra thick] (0,4) -- (0,3) -- (1,3) -- (1,4) -- (4,4);
			\draw[ultra thick] (0,2) -- (4,2) -- (4,0) -- (1,0) -- (1,1) -- (0,1) -- cycle;
			
  		\end{tikzpicture}
  		\hfill\ {}
	\end{center}
	\caption{On the left, four frames such that each consecutive pair is compatible. On the right, the non-GSAW graph you get when you join them all together.}
	\label{figure:frame-ex-3}
\end{figure}
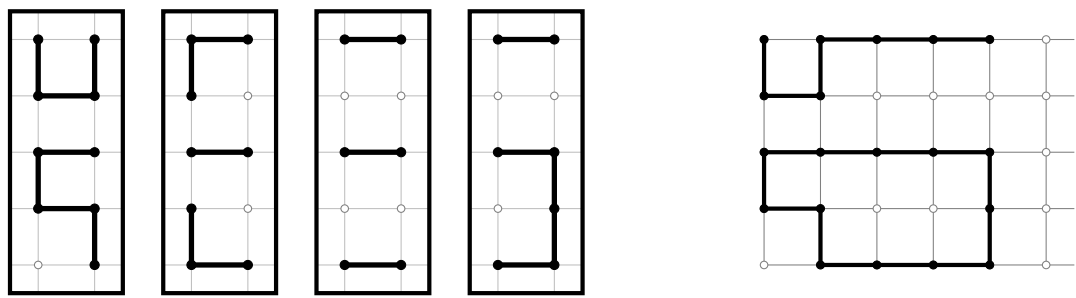

To remedy this, we will define a version of a frame that encodes not just which edges are present, but also the order in which they appear in an actual GSAW that uses the frame, as well as which edges are already connected by edges from previous frames.

Before giving a formal definition, we illustrate this idea using again the GSAW in Figure~\ref{figure:frame-ex-1}. Frame $0$ will consist of the seven edges that it did before, but each connected group of edges will be directed and we will also designate the order in which the connected components appear. The leftmost picture in Figure~\ref{figure:frame-ex-5} shows this new Frame $0$. The arrows denote the direction of the edges in each part of the GSAW, and we have used colors and endpoint shapes to denote the order in which the parts appear: the green part with circular endpoints comes first, the red part with square endpoints comes second, and the blue part with diamond endpoints comes third. 

In Frame $1$, the red and blue parts from Frame $0$ have now joined together. Put another way, if we looked only at the edges from the GSAW whose endpoints have $x$-coordinates $0$, $1$, or $2$, we would see only two connected components. So, even though Frame $1$ has three separate connected components, we color two of them red (with square endpoints) to denote that these parts were already connected by edges to the left of the frame. In order to still remember the order in which the red parts are traversed, we write one directional arrow on the first part, and two directional arrows on the second part. The remainder of Figure~\ref{figure:frame-ex-5} shows Frames $2$ and $3$.

\begin{figure}
	\begin{center}
		\begin{tikzpicture}[
			scale=0.75,
			baseline=(current bounding box.center),
		]
			\draw[lightgray] (-0.5,-0.5) grid (1.5,4.5);
			\graydots{}
			\draw[ultra thick] (-0.5,-0.5) rectangle (1.5,4.5);
			
		    \draw[t1, a1, circ] (0,4) -- (1,4); 
		    \draw[t2, a1, sq] (1,1) -- (1,2);
		    \draw[t3, a1] (0,2) -- (0,1);
			\draw[t3, diam] (1,3) -- (0,3) -- (0,2) -- (0,1) -- (0,0) -- (1,0);
		\end{tikzpicture}
		\quad\quad
		\begin{tikzpicture}[
			scale=0.75,
			baseline=(current bounding box.center),
		]
			\draw[lightgray] (-0.5,-0.5) grid (1.5,4.5);
			\graydots{}
			\draw[ultra thick] (-0.5,-0.5) rectangle (1.5,4.5);
			
		    \draw[t1, a1, circ] (0,4) -- (1,4); 
		    \draw[t2, a1] (0,2) -- (1,2);
		    \draw[t2, a2, sq] (0,0) -- (1,0);
		    \draw[t2, sq] (0,3) -- (1,3) -- (1,2) -- (0,2) -- (0,1) -- (1,1);
		\end{tikzpicture}
		\quad\quad
		\begin{tikzpicture}[
			scale=0.75,
			baseline=(current bounding box.center),
		]
			\draw[lightgray] (-0.5,-0.5) grid (1.5,4.5);
			\graydots{1/2,1/3}
			\draw[ultra thick] (-0.5,-0.5) rectangle (1.5,4.5);
			
		    \draw[t1, a1, circ] (0,4) -- (1,4); 
		    \draw[t2, a2, sq] (0,2) -- (0,3);
		    \draw[t2, a1, sq] (1,1) -- (0,1);
		    \draw[t2, a3,sq] (0,0) -- (1,0);
		\end{tikzpicture}
		\quad\quad
		\begin{tikzpicture}[
			scale=0.75,
			baseline=(current bounding box.center),
		]
			\draw[lightgray] (-0.5,-0.5) grid (1.5,4.5);
			\graydots{0/2,0/3}
			\draw[ultra thick] (-0.5,-0.5) rectangle (1.5,4.5);
			
		    \draw[t1, a1] (1,3) -- (1,2); 
		    \draw[t1, a2, circ] (0,0) -- (1,0); 
		    \draw[t1, circ] (0,4) -- (1,4) -- (1,1) -- (0,1);	    
		\end{tikzpicture}
	\end{center}
	\caption{The leftmost four frames of the GSAW in Figure~\ref{figure:frame-ex-1}, augmented with information about the order in which the components appear.}
	\label{figure:frame-ex-5}
\end{figure}
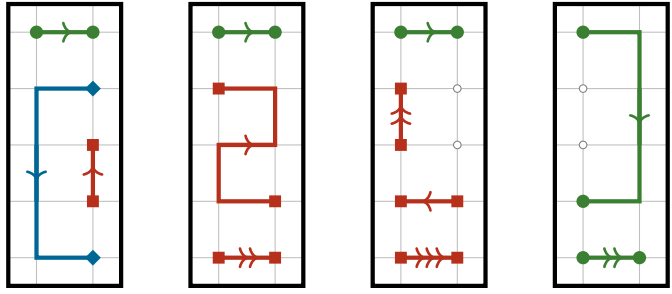

With this information, we can determine whether a given frame can be concatenated to a GSAW that is being built from left-to-right using only information from the most recent frame. The ordering of the groups of edges and the knowledge of whether two groups have already been connected is sufficient to guarantee that the final set of edges produced is in fact a GSAW. 

In order to give a formal definition of a frame we first need to introduce two intermediate concepts, paths and segments.

\begin{definition}
	A \emph{path} in a graph is a nonempty sequence of points $P_1, \ldots, P_\ell$ that forms a sequence of directed edges with consistent directionality: each internal vertex in the path is the end of one edge and the start of another. Note that a path is allowed to consist of a single point, in which case there is no directionality.
\end{definition}
\begin{definition}
	A \emph{segment} is an ordered list of paths that share no vertices in common.
\end{definition}
\begin{definition}
	A \emph{frame} is an ordered list of segments that share no vertices in common.
\end{definition}

Pictorially, we use a different color for each segment of a frame, e.g., the leftmost frame in Figure~\ref{figure:frame-ex-5} contains three segments in the order green, red, and blue, and each of these segments contains only a single path. We denote the order of multiple paths in a single segment by the number of arrowheads drawn, e.g., the third frame from the left in Figure~\ref{figure:frame-ex-5} has two segments, the second of which has three paths, each with a single edge. The first path is $(1,1) \to (0,1)$, the second path is $(0,2) \to (0,3)$, and the third path is $(0,0) \to (1,0)$. 

\subsection{Finite State Machines}
\label{subsection:finite-state-machines}

A finite state machine is a construction from the field of formal language theory that has found applications in many other areas, including in combinatorics. In typical usage, one has a finite set of symbols (or \emph{letters}) $\Sigma$ called the \emph{alphabet}, a finite set of \emph{states} $S$, a designated \emph{start state} $s \in S$, a set of \emph{accepting states} $A \subseteq S$, and a transition function $\delta : S \times \Sigma \to S$ that maps each (state, letter) pair to a state. A \emph{word} is a sequence of letters from the alphabet. Any given word defines a sequence of states that the finite state machines moves through as it reads one letter at a time, and this word is considered \emph{accepted} if the final state is an accepting state, and \emph{rejected} otherwise. See Sipser's classic book~\cite{sipser:theory-of-computation} for more details.

\begin{figure}
    \centering
    \begin{minipage}{0.45\textwidth}
		\begin{center}
			\begin{tikzpicture}[shorten >=1pt, shorten <=1pt,node distance=2cm,on grid,auto,initial text={}]
				\node[state,initial,accepting] (1) {1};
				\node[state,accepting] (2) [above right=of 1, xshift=30pt, yshift=-5pt] {2};
				\node[state] (3) [below right=of 1, xshift=30pt, yshift=5pt] {3};
				
				\path[->]
				(1) edge [bend left=15] node {$b$} (2)
				 edge [loop above] node {$a$} ()
				(2) edge [bend left=15] node {$b$} (1)
				 edge [bend left=15] node {$a$} (3)
				(3) edge [bend left=15] node {$a$} (1)
				 edge [bend left=15] node {$b$} (2);
			\end{tikzpicture}
		\end{center}
		\caption{A traditional finite state machine with three states.}
		\label{figure:fsm-1}
	\end{minipage}
	\hfill
    \begin{minipage}{0.45\textwidth}
        \begin{center}
			\begin{tikzpicture}[shorten >=1pt, shorten <=1pt,node distance=2cm,on grid,auto,initial text={}]
				\node[state,initial,accepting] (1) {1};
				\node[state,accepting] (2) [above right=of 1, xshift=30pt, yshift=-5pt] {2};
				\node[state] (3) [below right=of 1, xshift=30pt, yshift=5pt] {3};
				
				\path[->]
				(1) edge [bend left=15] node[yshift=5pt, xshift=14pt] {$2x$} (2)
				 edge [loop above] node {$x^2$} ()
				(2) edge [bend left=15] node[xshift=-15pt, yshift=-5pt] {$x$} (1)
				 edge [bend left=15] node[yshift=-5pt]  {$x+x^2$} (3)
				 edge [bend right=15, swap] node[yshift=-5pt] {$3x^2$} (3)
				(3) edge [bend left=15] node {$x$} (1);
			\end{tikzpicture}
		\end{center}
		\caption{A combinatorial finite state machine with three states.}
		\label{figure:fsm-2}
    \end{minipage}
\end{figure}

Consider for example the finite state machine depicted in Figure~\ref{figure:fsm-1} which has alphabet $\Sigma = \{a,b\}$, state set $S = \{1, 2, 3\}$, start state $1$ (designated by the incoming arrow), accepting states $\{1, 2\}$ (designated by the double circle), and transition function $\delta$ defined by the arrows, i.e, $\delta(1, a) = 1$, $\delta(1, b) = 2$, $\delta(2, a) = 3$, and so on. When the word $w = ababbbaa$ is input into this finite state machine, it starts in state $1$, stays in state $1$ after reading $w_1 = a$, moves to state $2$ after reading $w_2 = b$, moves to state $3$ after reading $w_3 = a$, etc. The full state sequence is $1, 1, 2, 3, 2, 1, 2, 3, 1$. Since the last state in this sequence is an accepting state, the word $w$ is accepted.

Let $a_n$ be the number of words of length $n$ that are accepted by a given finite state machine. The generating function $\sum_{n \geq 0} a_nx^n$ is guaranteed to be rational, and a technique called the \emph{transfer matrix method} makes it possible to determine it algorithmically; see Stanley~\cite[Section 4.7]{stanley:ec1}. One starts by forming the adjacency matrix $M$ whose rows and columns are indexed by the states $S$ such that $M_{i,j}$ is the number of transitions from state $i$ to state $j$. Let $I$ be the identity matrix with the same dimensions as $M$. Then, the generating function for accepted words is
\[
	\sum_{t \in A} (I - xM)^{-1}_{s, t},
\]
that is, the sum of the entries in $(1 - xM)^{-1}$ in the row corresponding to the start state and in columns corresponding to the accepting states. For the finite state machine in Figure~\ref{figure:fsm-1}, the adjacency matrix is
\[
	M = \left[\begin{array}{ccc}
		1 & 1 & 0\\
		1 & 0 & 1\\
		1 & 1 & 0
	\end{array}\right].
\]
Defining $M' = (I - xM)^{-1}$, we find that the generating function is
\[
	f(x) = M'_{1,1} + M'_{1,2} = \frac{1+x-x^2}{(1+x)(1-2x)} = 1 + 2x + 3x^2 + 7x^3 + \cdots.
\]

In many combinatorial applications, including in this work, the alphabet is actually irrelevant, and one really wants only to think of the finite state machine as a directed graph (allowing parallel edges and loops) in which the states are the vertices and the transitions are the edges. We call this a \emph{combinatorial finite state machine}.

For more descriptive power, we can assign each edge a \emph{weight}. We define the weight of a walk $w$ through a directed graph, denoted $\wt(w)$, to be the product of the weights of the edges it traverses. We can again use the transfer matrix method to compute the generating function for weighted walks starting at a designated start vertex and ending at designated accepting vertices. Consider the directed graph shown in Figure~\ref{figure:fsm-2}.

We form in the same way as before the adjacency matrix $M$, setting the entry in row $i$ and column $j$ to be the sum of the weights of all edges from vertex $i$ to vertex $j$. For the example above, we find
\[
	M = \left[\begin{array}{ccc}
		x^2 & 2x & 0\\
		x & 0 & x+4x^2\\
		x & 0 & 0
	\end{array}\right].
\]
Then, the generating function $f(x) = \sum_{w} \wt(w)$ taken over all walks that start at the start vertex and end at an accepting vertex is the sum of the entries in $(1 - M)^{-1}$ in the row corresponding to the start vertex and the columns corresponding to the accepting vertices. Note that the condition that each nonzero entry of $M$ has no constant term is sufficient, but not necessary, to guarantee the invertibility of $1-M$. In this case,
\[
	f(x) = \frac{1}{1-2x+x^2-4x^3} = 1 + 2x + 3x^2 + 8x^3 + \cdots.
\]
The term $2x$ in this series expansion comes from the walk $1 \xrightarrow{2x} 2$ (weights of edges are shown on top of the arrow). The term $3x^2$ comes from two walks: the first is $1 \xrightarrow{x^2} 1$ which has weight $x^2$, and the second is $1 \xrightarrow{2x} 2 \xrightarrow{x} 1$ which has weight $(2x)(x) = 2x^2$. The term $8x^3$ comes from three walks:
\begin{align*}
	1 &\xrightarrow{x^2} 1 \xrightarrow{2x} 2\\
	1 &\xrightarrow{2x} 2 \xrightarrow{x} 1 \xrightarrow{2x} 2\\
	1 &\xrightarrow{2x} 2 \xrightarrow{x+x^2} 3 \xrightarrow{x} 1.	
\end{align*}
The first of these three walks has weight $(x^2)(2x) = 2x^3$, the second has weight $(2x)(x)(2x) = 4x^3$ and the third has weight $(2x)(x+x^2)(x) = 2x^3 + 2x^4$. Thus the total contribution of walks to the $x^3$ term is $8x^3$.

\subsection{Construction of the Directed Graph}
\label{subsection:non-prob-construction}

In this subsection we will describe how to construct, for each grid graph $\GG_h$, a directed graph $D_h$ such that walks in $D_h$ from the start vertex to accepting vertices with weight $x^ny^k$ bijectively correspond to GSAWs with $n$ edges and displacement $k$.\footnote{Note that on the grid graph $\GG_h$, the displacement of a walk is the same as the largest $x$-coordinate of any vertex on the walk.} In other words, the generating function counting walks by weight will be equal to the generating function for GSAWs by length and displacement. In this way, we can guarantee the rationality of the generating functions for GSAWs on these grid graphs and algorithmically compute them.

Fix a height $h \geq 2$. Our frames will therefore have width $2$ and height $h$. In order to describe $D_h$, we need to specify four things: the vertices, the edges and their weights, the start vertex, and the accepting vertices.

For this theoretical description of $D_h$, we can consider the vertices to be all possible frames.\footnote{The vast majority of frames actually cannot arise as part of a GSAW, so when actually computing $D_h$ we will only consider frames that can.} In order to describe the edges of $D_h$, we imagine starting with a frame $F$ of width 2, adding an additional empty column on the right-hand side, extending the segments of $F$ in all possible ways into this new wider frame, and then removing the leftmost column and all edges touching it to obtain a new width $2$ frame $F'$.

\begin{figure}
	\begin{center}
		\begin{tikzpicture}[
			scale=0.75,
			baseline=(current bounding box.center),
		]
			\draw[lightgray] (-0.5,-0.5) grid (1.5,4.5);
			\graydots{}
			\draw[ultra thick] (-0.5,-0.5) rectangle (1.5,4.5);
			
		    \draw[t1, a1, circ] (0,4) -- (1,4); 
		    \draw[t2, a1, sq] (1,1) -- (1,2);
		    \draw[t3, a1] (0,2) -- (0,1);
			\draw[t3, diam] (1,3) -- (0,3) -- (0,2) -- (0,1) -- (0,0) -- (1,0);
			\node at (0.5, -1) {$F_1$};
		\end{tikzpicture}
		\quad\quad
		\begin{tikzpicture}[
			scale=0.75,
			baseline=(current bounding box.center),
		]
			\draw[lightgray] (-0.5,-0.5) grid (2.5,4.5);
			\graydots{2/0,2/1,2/2,2/3,2/4}
			\draw[ultra thick] (-0.5,-0.5) rectangle (2.5,4.5);
			
		    \draw[t1, a1, circ] (0,4) -- (1,4); 
		    \draw[t2, a1, sq] (1,1) -- (1,2);
		    \draw[t3, a1] (0,2) -- (0,1);
			\draw[t3, diam] (1,3) -- (0,3) -- (0,2) -- (0,1) -- (0,0) -- (1,0);
			\node at (1, -1) {$F_2$};
		\end{tikzpicture}
		\quad\quad
		\begin{tikzpicture}[
			scale=0.75,
			baseline=(current bounding box.center),
		]
			\draw[lightgray] (-0.5,-0.5) grid (2.5,4.5);
			\graydots{}
			\draw[ultra thick] (-0.5,-0.5) rectangle (2.5,4.5);
			
		    \draw[t1, a1, circ] (0,4) -- (2,4); 
		    \draw[t2, a1] (1,1) -- (1,2);
		    \draw[ultra thick, BrickRed] (2,1) -- (1,1) -- (1,2) -- (2,2) -- (2,2.5);
		    \draw[t2, sq] (2,1) -- (2,1);
		    \draw[t3, a1] (0,2) -- (0,1);
			\draw[t3] (2,2.5) -- (2,3) -- (1,3) -- (0,3) -- (0,2) -- (0,1) -- (0,0) -- (1,0) -- (2,0);
			\draw[t3, diam] (2,0) -- (2,0);
			\draw[ultra thick, dotted] (1.6, 2.5) -- (2.4, 2.5);
			\node at (1, -1) {$F_3$};
		\end{tikzpicture}
		\quad\quad
		\begin{tikzpicture}[
			scale=0.75,
			baseline=(current bounding box.center),
		]
			\draw[lightgray] (-0.5,-0.5) grid (2.5,4.5);
			\graydots{}
			\draw[ultra thick] (-0.5,-0.5) rectangle (2.5,4.5);
			
		    \draw[t1, a1, circ] (0,4) -- (2,4); 
		    \draw[t2, a1] (1,1) -- (1,2);
		    \draw[t2, sq] (2,1) -- (1,1) -- (1,2) -- (2,2) -- (2,3) -- (1,3) -- (0,3) -- (0,2) -- (0,1) -- (0,0) -- (1,0) -- (2,0);
		    \node at (1, -1) {$F_4$};
		\end{tikzpicture}
		\quad\quad
		\begin{tikzpicture}[
			scale=0.75,
			baseline=(current bounding box.center),
		]
			\draw[lightgray] (-0.5,-0.5) grid (1.5,4.5);
			\graydots{}
			\draw[ultra thick] (-0.5,-0.5) rectangle (1.5,4.5);
			
		    \draw[t1, a1, circ] (0,4) -- (1,4); 
		    \draw[t2, a2, sq] (0,0) -- (1,0);
		    \draw[t2, a1] (0,2) -- (1,2);
		    \draw[t2, sq] (1,1) -- (0,1) -- (0,2) -- (1,2) -- (1,3) -- (0,3);
		    \node at (0.5, -1) {$F_5$};
		\end{tikzpicture}

	\end{center}
	\caption{An example of the frame extension process that defines the edges in $D_h$.}
	\label{figure:frame-transition-1}
\end{figure}
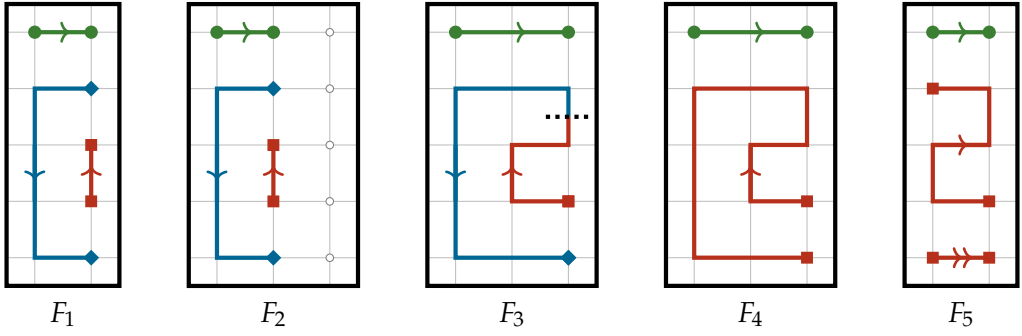

For example, starting with the frame $F_1$ on the left in Figure~\ref{figure:frame-transition-1}, we first add an empty column to obtain $F_2$. We call the first point of the first path of a segment its \emph{start} and the last point of the last path of a segment its \emph{end}. One way that $F_2$ is can be extended, shown as $F_3$, is to advance the end of the first (green) segment from $(1,4)$ to $(2,4)$, advance the start of the second (red) segment backward from $(1,1)$ to $(2,1)$, advance both the end of the second segment and the start of the third (blue) segment rightward and have them merge together, and advance the end of the third segment from $(1,0)$ to $(2,0)$. Because the second and third segment have merged into one (i.e., we have revealed enough of the GSAW we are building to see where they connect), they become a single segment, which is now the second of the two segments in the frame, as shown in $F_4$. Finally, we obtain $F_5$ by trimming off the leftmost column of $F_4$ to return to a width $2$ frame. In doing so, some intermediate edges of the second segment are removed; this converts the second segment from a single path with $11$ edges into two paths, one with $5$ edges and one with $1$ edge. This example shows that in $D_5$, there is an edge from $F_1$ to $F_5$, and the weight of that edge is $x^6y$ because $6$ new edges were added, and the width of the GSAW being built was increased by one.

The general procedure to determine how a frame $F$ may be extended is as follows.
\begin{enumerate}
	\item In any frame that is part of a GSAW, the start and end of every segment must be in the right column with the exception of the start of the first segment (which will never be on the right column) and the end of the last segment (which may be in the right column, but is not required to be). If this is not the case, then a segment will fall out of the frame before connecting to another segment, resulting in a disconnected walk. Therefore, if a frame does not satisfy this condition, it has no outgoing edges.
	\item We can now assume that the frame $F$ satisfies the condition above. By the same logic, in any extension of $F$, the start and end of every segment (except the start of the first and the end of the last) must be extended rightward into the new rightmost column. The end of the last segment may be extended into the new rightmost column if possible, but it is not required (unless a new segment is later added in the last position, as described in Step~\ref{step4}).
	\item Each of these extended ends may then move upward or downward any number of edges in the new rightmost column, as long as they do not overlap. Additionally, the end of each segment $i$ may join up to the start of next segment $i+1$ if doing so would not cause overlap with the vertices of any other segments. If so, we say that these two segments have \emph{merged}. Note that segments may only merge with the next segment in order, but that multiple merges may be possible. For example, it may be possible that segments $i$, $i+1$, and $i+2$ all merge together if the end of segment $i$ joins with the start of segment $i+1$ and the end of segment $i+1$ joins with the start of segment $i+2$.
	\item \label{step4}Finally, new segments may be inserted to the frame. They can occupy any position among the ordered list of segments except for the first, as the first segment must always be the one that started in the top-left corner of $\GG_h$. These new segments consist of a single path that moves upward or downward any number of vertices in the rightmost column, as long as they do not overlap with vertices in other segments. If a new segment is the last segment of the frame, it may consist of only a single point---if this is the case, then when it is extended later, this point is considered the end, and the start is extended rightward. New segments that are not the last segment of the frame must consist of at least two vertices. Lastly, note that if a new segment is made to be the last segment of the frame, then the previous last segment must have had its endpoint extended rightward in an earlier step (i.e., it was not actually optional in that step).
\end{enumerate}

For any width $2$ frame $F$, the procedure above defines a (possibly empty) set $E(F)$ of width $3$ extended frames that $F$ may evolve into. Recall that in a completed GSAW, the endpoint must be trapped; that is, there can be no open neighboring position into which it could have extended further. To ensure that the GSAWs we generate satisfy this property, we must remove from $E(F)$ any frames in which the end of the last segment is not trapped and either the end of the last segment is not in the rightmost column or the last segment is a single point in the rightmost column, either of which indicates that the end of this segment is bound to be the last point in the GSAW. We call the remaining set of extended frames $E'(F)$. Lastly, as in the example in Figure~\ref{figure:frame-transition-1}, each extended frame must have its leftmost column trimmed to turn it back into a width $2$ frame. To trim a frame, we simply remove its leftmost column, and all edges incident to any vertex in the leftmost column. This may result in paths being split into multiple paths; their relative order within the segment is maintained. A segment never splits into multiple segments. 

We call the set of trimmed frames the \emph{neighbors} of $F$ and denote it $N(F)$. These are the frames for which there will be an incoming directed edge originating from $F$, whose weight will be $x^ny$ where $n$ is the number of new edges added.

We take this opportunity to present a comprehensive example of the extension process. Consider the frame $F$ shown in Figure~\ref{figure:comp-ex-1}. The set $E(F)$ of extended frames contains $35$ frames, but many of these fail the condition that the end of the last segment must be trapped unless it is in the rightmost column, including the five examples in Figure~\ref{figure:comp-ex-2}.

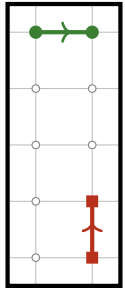
\begin{figure}
	\begin{center}
		\begin{tikzpicture}[
			scale=0.75,
			baseline=(current bounding box.center),
		]
			\draw[lightgray] (-0.5,-0.5) grid (1.5,4.5);
			\graydots{0/0, 0/1, 0/2, 0/3, 1/2, 1/3}
			\draw[ultra thick] (-0.5,-0.5) rectangle (1.5,4.5);
			
		    \draw[t1, a1, circ] (0,4) -- (1,4); 
		    \draw[t2, a1, sq] (1,0) -- (1,1);
		\end{tikzpicture}
	\end{center}
	\caption{A frame with two segments, each containing a single path that has a single edge.}
	\label{figure:comp-ex-1}
\end{figure}

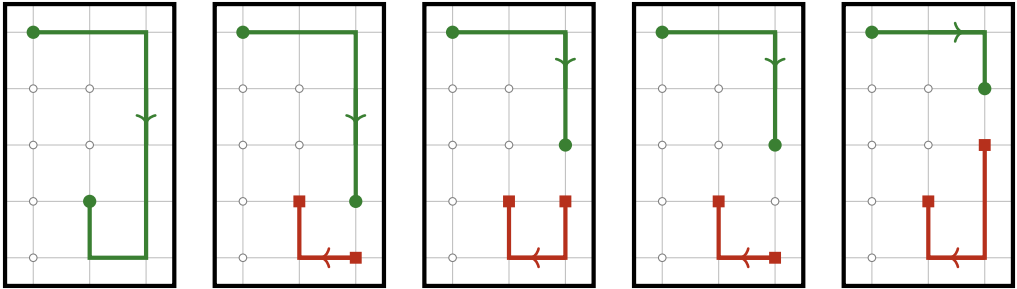
\begin{figure}
	\begin{center}
		\begin{tikzpicture}[
			scale=0.75,
			baseline=(current bounding box.center),
		]
			\draw[lightgray] (-0.5,-0.5) grid (2.5,4.5);
			\graydots{0/0, 0/1, 0/2, 0/3, 1/2, 1/3}
			\draw[ultra thick] (-0.5,-0.5) rectangle (2.5,4.5);
			
		    \draw[t1, circ] (0,4) -- (2,4) -- (2,0) -- (1,0) -- (1,1);
		    \draw[t1, a1] (2,3) -- (2,2); 
		\end{tikzpicture}
		\quad
		\begin{tikzpicture}[
			scale=0.75,
			baseline=(current bounding box.center),
		]
			\draw[lightgray] (-0.5,-0.5) grid (2.5,4.5);
			\graydots{0/0, 0/1, 0/2, 0/3, 1/2, 1/3}
			\draw[ultra thick] (-0.5,-0.5) rectangle (2.5,4.5);
			
		    \draw[t1, circ] (0,4) -- (2,4) -- (2,1);
		    \draw[t1, a1] (2,3) -- (2,2); 
		    		    \draw[t2, sq] (2,0) -- (1,0) -- (1,1);
		    \draw[t2, a1] (2,0) -- (1,0);
		\end{tikzpicture}
		\quad
		\begin{tikzpicture}[
			scale=0.75,
			baseline=(current bounding box.center),
		]
			\draw[lightgray] (-0.5,-0.5) grid (2.5,4.5);
			\graydots{0/0, 0/1, 0/2, 0/3, 1/2, 1/3}
			\draw[ultra thick] (-0.5,-0.5) rectangle (2.5,4.5);
			
		    \draw[t1, circ] (0,4) -- (2,4) -- (2,2);
		    \draw[t1, a1] (2,4) -- (2,3); 
		    \draw[t2, sq] (2,1) -- (2,0) -- (1,0) -- (1,1);
		    \draw[t2, a1] (2,0) -- (1,0);
		\end{tikzpicture}
		\quad
		\begin{tikzpicture}[
			scale=0.75,
			baseline=(current bounding box.center),
		]
			\draw[lightgray] (-0.5,-0.5) grid (2.5,4.5);
			\graydots{0/0, 0/1, 0/2, 0/3, 1/2, 1/3, 2/1}
			\draw[ultra thick] (-0.5,-0.5) rectangle (2.5,4.5);
			
		    \draw[t1, circ] (0,4) -- (2,4) -- (2,2);
		    \draw[t1, a1] (2,4) -- (2,3); 
		    \draw[t2, sq] (2,0) -- (1,0) -- (1,1);
		    \draw[t2, a1] (2,0) -- (1,0);
		\end{tikzpicture}
		\quad
		\begin{tikzpicture}[
			scale=0.75,
			baseline=(current bounding box.center),
		]
			\draw[lightgray] (-0.5,-0.5) grid (2.5,4.5);
			\graydots{0/0, 0/1, 0/2, 0/3, 1/2, 1/3}
			\draw[ultra thick] (-0.5,-0.5) rectangle (2.5,4.5);
			
		    \draw[t1, circ] (0,4) -- (2,4) -- (2,3);
		    \draw[t1, a1] (1,4) -- (2,4); 
		    \draw[t2, sq] (2,2) -- (2,0) -- (1,0) -- (1,1);
		    \draw[t2, a1] (2,0) -- (1,0);
		\end{tikzpicture}

	\end{center}
	\caption{Five extended frames that are discarded because the end point of the last segment is neither in the rightmost column nor trapped.}
	\label{figure:comp-ex-2}
\end{figure}

Once these bad extended frames have been removed, we are left with a set $E'(F)$ with $10$ extended frames. Upon trimming each of these extended frames by removing their leftmost column, we obtain the set $N(F)$ with the $10$ neighbors of $F$, as shown in Figure~\ref{figure:comp-ex-3}.

\begin{figure}
	\begin{center}
		\begin{tikzpicture}[
			scale=0.75,
			baseline=(current bounding box.center),
		]
			\draw[lightgray] (-0.5,-0.5) grid (1.5,4.5);
			\graydots{0/2,0/3}
			\draw[ultra thick] (-0.5,-0.5) rectangle (1.5,4.5);
			
		    \draw[t1,circ] (0,4) -- (1,4) -- (1,2); 
		    \draw[t1,a1] (1,4) -- (1,3);
		    \draw[t2,sq] (1,0) -- (0,0) -- (0,1) -- (1,1);
		    \draw[t2,a1] (0,0) -- (0,1);
		\end{tikzpicture}
		\quad\quad
		\begin{tikzpicture}[
			scale=0.75,
			baseline=(current bounding box.center),
		]
			\draw[lightgray] (-0.5,-0.5) grid (1.5,4.5);
			\graydots{0/2,0/3}
			\draw[ultra thick] (-0.5,-0.5) rectangle (1.5,4.5);
			
		    \draw[t1,circ] (0,4) -- (1,4) -- (1,3); 
		    \draw[t1,a1] (1,4) -- (1,3);
		    \draw[t2,sq] (1,0) -- (0,0) -- (0,1) -- (1,1) -- (1,2);
		    \draw[t2,a1] (0,0) -- (0,1);
		\end{tikzpicture}
		\quad\quad
		\begin{tikzpicture}[
			scale=0.75,
			baseline=(current bounding box.center),
		]
			\draw[lightgray] (-0.5,-0.5) grid (1.5,4.5);
			\graydots{0/2,0/3,1/2}
			\draw[ultra thick] (-0.5,-0.5) rectangle (1.5,4.5);
			
		    \draw[t1,circ] (0,4) -- (1,4) -- (1,3); 
		    \draw[t1,a1] (1,4) -- (1,3);
		    \draw[t2,sq] (1,0) -- (0,0) -- (0,1) -- (1,1);
		    \draw[t2,a1] (0,0) -- (0,1);
		\end{tikzpicture}
		\quad\quad
		\begin{tikzpicture}[
			scale=0.75,
			baseline=(current bounding box.center),
		]
			\draw[lightgray] (-0.5,-0.5) grid (1.5,4.5);
			\graydots{0/2,0/3}
			\draw[ultra thick] (-0.5,-0.5) rectangle (1.5,4.5);
			
		    \draw[t1,circ] (0,4) -- (1,4);
		    \draw[t1,a1] (0,4) -- (1,4);
		    \draw[t2,sq] (1,0) -- (0,0) -- (0,1) -- (1,1) -- (1,3);
		    \draw[t2,a1] (0,0) -- (0,1);
		\end{tikzpicture}
		\quad\quad
		\begin{tikzpicture}[
			scale=0.75,
			baseline=(current bounding box.center),
		]
			\draw[lightgray] (-0.5,-0.5) grid (1.5,4.5);
			\graydots{0/2,0/3,1/3}
			\draw[ultra thick] (-0.5,-0.5) rectangle (1.5,4.5);
			
		    \draw[t1,circ] (0,4) -- (1,4);
		    \draw[t1,a1] (0,4) -- (1,4);
		    \draw[t2,sq] (1,0) -- (0,0) -- (0,1) -- (1,1) -- (1,2);
		    \draw[t2,a1] (0,0) -- (0,1);
		\end{tikzpicture}\\[15pt]

		\begin{tikzpicture}[
			scale=0.75,
			baseline=(current bounding box.center),
		]
			\draw[lightgray] (-0.5,-0.5) grid (1.5,4.5);
			\graydots{0/2,0/3}
			\draw[ultra thick] (-0.5,-0.5) rectangle (1.5,4.5);
			
		    \draw[t1,circ] (0,4) -- (1,4);
		    \draw[t1,a1] (0,4) -- (1,4);
		    \draw[t2,sq] (1,0) -- (0,0) -- (0,1) -- (1,1);
		    \draw[t2,a1] (0,0) -- (0,1);
		\end{tikzpicture}
		\quad\quad
		\begin{tikzpicture}[
			scale=0.75,
			baseline=(current bounding box.center),
		]
			\draw[lightgray] (-0.5,-0.5) grid (1.5,4.5);
			\graydots{0/2,0/3}
			\draw[ultra thick] (-0.5,-0.5) rectangle (1.5,4.5);
			
		    \draw[t1,circ] (0,4) -- (1,4);
		    \draw[t1,a1] (0,4) -- (1,4);
		    \draw[t2,sq] (1,0) -- (0,0) -- (0,1) -- (1,1);
		    \draw[t2,a1] (0,0) -- (0,1);
		    \draw[t3,a1,diam] (1,3) -- (1,2);
		\end{tikzpicture}
		\quad\quad
		\begin{tikzpicture}[
			scale=0.75,
			baseline=(current bounding box.center),
		]
			\draw[lightgray] (-0.5,-0.5) grid (1.5,4.5);
			\graydots{0/2,0/3}
			\draw[ultra thick] (-0.5,-0.5) rectangle (1.5,4.5);
			
		    \draw[t1,circ] (0,4) -- (1,4);
		    \draw[t1,a1] (0,4) -- (1,4);
		    \draw[t2,sq] (1,0) -- (0,0) -- (0,1) -- (1,1);
		    \draw[t2,a1] (0,0) -- (0,1);
		    \draw[t3,a1,diam] (1,2) -- (1,3);
		\end{tikzpicture}
		\quad\quad
		\begin{tikzpicture}[
			scale=0.75,
			baseline=(current bounding box.center),
		]
			\draw[lightgray] (-0.5,-0.5) grid (1.5,4.5);
			\graydots{0/2,0/3}
			\draw[ultra thick] (-0.5,-0.5) rectangle (1.5,4.5);
			
		    \draw[t1,circ] (0,4) -- (1,4);
		    \draw[t1,a1] (0,4) -- (1,4);
		    \draw[t3,diam] (1,0) -- (0,0) -- (0,1) -- (1,1);
		    \draw[t3,a1] (0,0) -- (0,1);
		    \draw[t2,a1,sq] (1,3) -- (1,2);
		\end{tikzpicture}
		\quad\quad
		\begin{tikzpicture}[
			scale=0.75,
			baseline=(current bounding box.center),
		]
			\draw[lightgray] (-0.5,-0.5) grid (1.5,4.5);
			\graydots{0/2,0/3}
			\draw[ultra thick] (-0.5,-0.5) rectangle (1.5,4.5);
			
		    \draw[t1,circ] (0,4) -- (1,4);
		    \draw[t1,a1] (0,4) -- (1,4);
		    \draw[t3,diam] (1,0) -- (0,0) -- (0,1) -- (1,1);
		    \draw[t3,a1] (0,0) -- (0,1);
		    \draw[t2,a1,sq] (1,2) -- (1,3);
		\end{tikzpicture}

	\end{center}
	\caption{The ten neighbors (valid next frames) of the frame show in Figure~\ref{figure:comp-ex-1}.}
	\label{figure:comp-ex-3}
\end{figure}
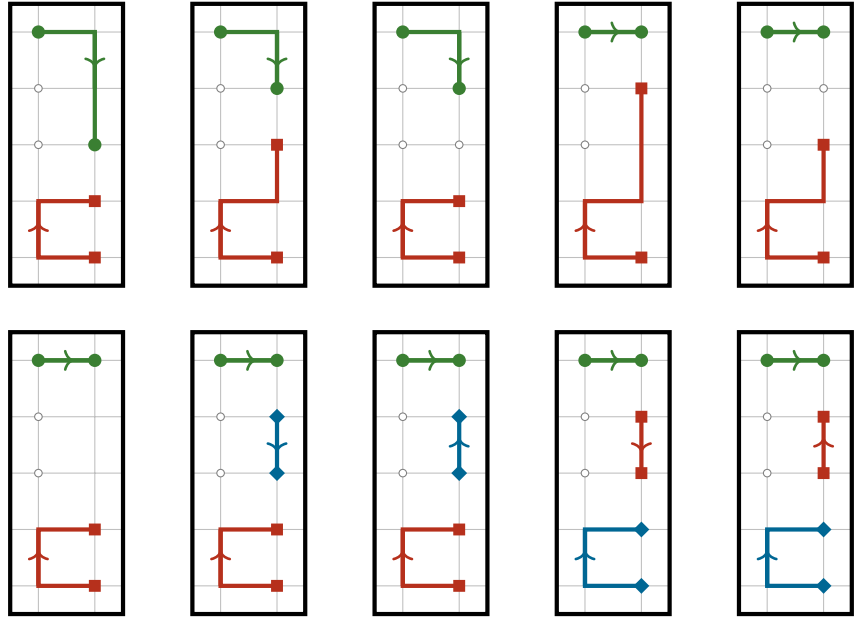

Having described the vertices and edges of $D_h$, it remains only to specify a start vertex and a set of accepting vertices. In this case, we would actually like to allow any vertex of $D_h$ that is a valid first frame of a GSAW to be a start state. To accomplish this, we add to $D_h$ a new vertex $s$ that has edges with weight $x^dy$ to each existing vertex of $D_h$ that represents a valid first frame with $d$ edges. The valid first frames of a GSAW are those in which all segments consist of a single path, only the last segment may consist of a single point, the first segment starts in the top-left corner at the point $(0, h-1)$, the starts of all other segments are in the right column, the ends of all segments (except possibly the last) are in the right column, if the end of the last segment is in the left column then it is trapped, and if the last segment is a single point in the right column then it is also trapped. For example, there are $7$ such first frames for the height $2$ case---these are shown in Figure~\ref{figure:height-2-starts}.

\begin{figure}
    \centering
    \begin{minipage}{0.6\textwidth}
		\begin{center}
			\begin{tikzpicture}[
				scale=0.75,
				baseline=(current bounding box.center),
			]
				\draw[lightgray] (-0.5,-0.5) grid (1.5,1.5);
				\graydots{0/0}
				\draw[ultra thick] (-0.5,-0.5) rectangle (1.5,1.5);
				
			    \draw[t1,circ] (0,1) -- (1,1) -- (1,0);
			    \draw[t1,a1] (1,1) -- (1,0);
			\end{tikzpicture}
			\quad
			\begin{tikzpicture}[
				scale=0.75,
				baseline=(current bounding box.center),
			]
				\draw[lightgray] (-0.5,-0.5) grid (1.5,1.5);
				\graydots{0/0,1/0}
				\draw[ultra thick] (-0.5,-0.5) rectangle (1.5,1.5);
				
			    \draw[t1,circ,a1] (0,1) -- (1,1);
			\end{tikzpicture}
			\quad
			\begin{tikzpicture}[
				scale=0.75,
				baseline=(current bounding box.center),
			]
				\draw[lightgray] (-0.5,-0.5) grid (1.5,1.5);
				\graydots{1/1}
				\draw[ultra thick] (-0.5,-0.5) rectangle (1.5,1.5);
				
			    \draw[t1,circ] (0,1) -- (0,0) -- (1,0);
			    \draw[t1, a1] (0,0) -- (1,0);
			\end{tikzpicture}
			\quad
			\begin{tikzpicture}[
				scale=0.75,
				baseline=(current bounding box.center),
			]
				\draw[lightgray] (-0.5,-0.5) grid (1.5,1.5);
				\graydots{}
				\draw[ultra thick] (-0.5,-0.5) rectangle (1.5,1.5);
				
			    \draw[t1,circ] (0,1) -- (1,1) -- (1,0) -- (0,0);
			    \draw[t1,a1] (1,1) -- (1,0);
			\end{tikzpicture}\ \\[10pt]

			\begin{tikzpicture}[
				scale=0.75,
				baseline=(current bounding box.center),
			]
				\draw[lightgray] (-0.5,-0.5) grid (1.5,1.5);
				\graydots{}
				\draw[ultra thick] (-0.5,-0.5) rectangle (1.5,1.5);
				
			    \draw[t1,circ,a1] (0,1) -- (1,1);
			    \draw[t2,sq,a1] (1,0) -- (0,0);
			\end{tikzpicture}
			\quad
			\begin{tikzpicture}[
				scale=0.75,
				baseline=(current bounding box.center),
			]
				\draw[lightgray] (-0.5,-0.5) grid (1.5,1.5);
				\graydots{}
				\draw[ultra thick] (-0.5,-0.5) rectangle (1.5,1.5);
				
			    \draw[t1,circ] (0,1) -- (0,0) -- (1,0);
			    \draw[t1,a1] (0,0) -- (1,0);
			    \draw[t2,sq] (1,1) -- (1,1);
			\end{tikzpicture}
			\quad
			\begin{tikzpicture}[
				scale=0.75,
				baseline=(current bounding box.center),
			]
				\draw[lightgray] (-0.5,-0.5) grid (1.5,1.5);
				\graydots{}
				\draw[ultra thick] (-0.5,-0.5) rectangle (1.5,1.5);
				
			    \draw[t1,circ] (0,1) -- (0,0) -- (1,0) -- (1,1);
			    \draw[t1,a1] (0,0) -- (1,0);
			\end{tikzpicture}
		\end{center}
		\caption{The seven starting frames for building GSAWs on $\GG_2$. Note that the single red square represents that the second segment contains only a single point, which will be the endpoint of the segment after it is extended further.}
		\label{figure:height-2-starts}
	\end{minipage}
	\hfill
	\begin{minipage}{0.35\textwidth}
		\begin{center}
			\begin{tikzpicture}[
				scale=0.75,
				baseline=(current bounding box.center),
			]
				\draw[lightgray] (-0.5,-0.5) grid (1.5,1.5);
				\graydots{}
				\draw[ultra thick] (-0.5,-0.5) rectangle (1.5,1.5);
				
			    \draw[t1,circ] (0,1) -- (1,1) -- (1,0) -- (0,0);
			    \draw[t1,a1] (1,1) -- (1,0);
			\end{tikzpicture}
			\quad
			\begin{tikzpicture}[
				scale=0.75,
				baseline=(current bounding box.center),
			]
				\draw[lightgray] (-0.5,-0.5) grid (1.5,1.5);
				\graydots{}
				\draw[ultra thick] (-0.5,-0.5) rectangle (1.5,1.5);
				
			    \draw[t1,circ] (0,1) -- (1,1) -- (1,0) -- (0,0);
			    \draw[t1,a1] (1,0) -- (1,1);
			\end{tikzpicture}
		\end{center}
		\caption{The two valid accepting states for building GSAWs on $\GG_2$.}
		\label{figure:height-2-ends}
	\end{minipage}
\end{figure}

Finally, the accepting states are those that contain a single segment (with one or multiple paths) whose end is in the leftmost column. The only two accepting states for the height $2$ case are shown in Figure~\ref{figure:height-2-ends}.

\subsection{A Bijection Between Walks and GSAWs}
\label{subsection:bijection}

Now that we have fully described the construction of the graph $D_h$, we finish this section by proving that we can use $D_h$ to enumerate GSAWs on $\GG_h$.

\begin{theorem}
\label{theorem:bijection}
	Walks in $D_h$ that start at the start vertex, end at an accepting vertex, and have weight $x^ny^k$ (denoted $\WW_{n,k}$) are in bijection with GSAWs with $n$ edges and displacement $k$ (denoted $\SS_{n,k}$). As a consequence, we can find the generating function for GSAWs on $\GG_h$ by applying the transfer matrix method to $D_h$.
\end{theorem}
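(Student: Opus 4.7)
The plan is to give an explicit bijection $\Phi: \SS_{n,k} \to \WW_{n,k}$ with inverse $\Psi$, obtained by reading off, respectively, the augmented frame decomposition of a GSAW. Given $W \in \SS_{n,k}$, the maximum $x$-coordinate reached by $W$ is $k$, so for each $i \in \{0, 1, \ldots, k-1\}$ let $F_i$ be the frame consisting of all edges of $W$ with both endpoints having $x$-coordinates in $\{i, i+1\}$, oriented according to $W$'s traversal direction, partitioned into segments by declaring two paths equivalent if they are joined by a sub-path of $W$ using only vertices of $x$-coordinate at most $i$, and with the ordering (of segments and of paths within each segment) inherited from the order in which $W$ traverses them. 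Set $\Phi(W) = (s, F_0, F_1, \ldots, F_{k-1})$.

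The first task is to show $\Phi(W)$ is a walk from $s$ to an accepting vertex of weight $x^n y^k$. That $F_0$ is reachable from $s$ follows by checking the valid-first-frame criteria against the fact that $W$ starts at $(0, h-1)$. That each transition $F_i \to F_{i+1}$ is an edge of $D_h$ is essentially tautological: steps (2)--(4) of the extension procedure in Section~\ref{subsection:non-prob-construction} are specifically designed to describe exactly the local changes the augmented frame structure can undergo when one additional column of a GSAW's edges is revealed --- segment ends either advance into the new column, merge with the next segment in order, or are joined by newly-inserted segments. That $F_{k-1}$ is an accepting vertex follows because connectedness of $W$ forces a single segment once all edges are in view, and the trapping of $W$'s endpoint forces the final traversed point of that segment to exit the rightmost frame leftward (any other possibility was already excluded at the transition in which the endpoint was placed, thanks to the $E(F) \setminus E'(F)$ pruning). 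The weight matches because every transition contributes exactly one $y$ (one new column of width) together with an $x$ for each newly-added edge.

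The inverse $\Psi$ glues consecutive frames by identifying $F_i$'s right column with $F_{i+1}$'s left column. Because the transition rule guarantees these columns agree as edge sets and segment labelings, the union of all edges is a well-defined subgraph $W'$ of $\GG_h$. The augmented data then determines a global traversal order: concatenate, segment-by-segment, the within-segment path orderings, respecting merges. The ordering information rules out cycles --- merges may only occur between consecutive segments, so a hypothetical cycle would have to arise from a single segment closing on itself, which is forbidden by the no-vertex-overlap condition within a frame. Self-avoidance is guaranteed by the vertex-disjointness of segments and paths at every stage, and the trapping condition enforced on intermediate transitions together with the accepting state definition ensures that $W'$'s endpoint has no unoccupied neighbor. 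The two maps are inverse by construction, and both preserve the weight $x^{|W|}y^{\|W\|}$; the generating-function consequence then follows from the transfer-matrix statement in Section~\ref{subsection:finite-state-machines}.

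The main obstacle is verifying that the augmented frame data --- specifically, the ordering of segments and of paths within a segment --- is exactly the right amount of information to rule out pathologies like Figure~\ref{figure:frame-ex-3}, in which locally-compatible frames combine to give a disconnected, cycle-containing non-GSAW. Concretely, the proof hinges on an inductive invariant: after $i$ transitions in $D_h$, the edges placed in columns $0$ through $i$ form the column-restricted prefix of some GSAW on $\GG_h$, and the augmented data on $F_{i-1}$ records its traversal structure faithfully. The inductive step requires checking that the ``consecutive-segments-only'' merge rule and the trapping-clause pruning of $E(F)$ together characterize exactly the allowable local moves, which is a case analysis over the possible behaviors (extension, merge, insertion) of each segment end in the new column. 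Once this invariant is established, $\Phi \circ \Psi = \mathrm{id}$ and $\Psi \circ \Phi = \mathrm{id}$ are immediate, completing the proof.
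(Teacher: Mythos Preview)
Your proposal is correct and follows essentially the same approach as the paper: both construct the bijection by reading off the augmented frame decomposition of a GSAW and gluing frames back together for the inverse, with the segment/path ordering data serving to rule out cycles and disconnection. The only cosmetic difference is that you take $\Phi$ to go from GSAWs to walks while the paper's $\Phi$ goes the other way, and you spell out the cycle-free/self-avoiding/trapped checks for the glued object in somewhat more detail than the paper, which dispatches them with ``by the way we have defined our start vertex, accepting vertices, and edges and their weights.''
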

\begin{proof}
	A walk with displacement $k$ involves $k$ width-two frames. To map a walk $F_1 \to \cdots \to F_k \in \WW_{n,k}$ to a GSAW $S$, we look at $F_1$ to determine which edges with both endpoints having $x$-coordinates $0$ and $1$ are in $S$, and we look at $F_i$ to determine the edges in $S$ that have either both vertices with $x$-coordinate $i$ or one vertex with $x$-coordinate $i-1$ and another with $x$-coordinate $i$. Graphically, this can be thought of as overlapping the right column of each frame with the left column of the next frame. By the way we have defined our start vertex, accepting vertices, and edges and their weights, we are guaranteed that the result is a connected, directed walk with $n$ edges and that its endpoint is trapped. Thus the map $\Phi: \WW_{n,k} \to \SS_{n,k}$ is well-defined. 
	
	To see that $\Phi$ is injective, consider two distinct walks $W_1, W_2 \in \WW_{n,k}$. Suppose that these walks first differ in the $i$th vertex, so we can write
	\begin{align*}
		W_1 &= F_1 \to \cdots \to F_{i-1} \to F_{i} \to \cdots \to F_k,\\
		W_2 &= F_1 \to \cdots \to F_{i-1} \to F'_{i} \to \cdots \to F'_{k}.
	\end{align*}
	If the frames $F_i$ and $F'_i$ have a different underlying set of edges, then $\Phi(W_1)$ and $\Phi(W_2)$ have a different set of edges involving $x$-coordinates $i$ and $i+1$, and thus are different. If they have the same underlying set of edges, then they must be in a different configuration of segments and paths, and this implies those edges must be traversed in a different order in $\Phi(W_1)$ than in $\Phi(W_2)$, and so also in this case are they different.
	
	To prove surjectivity, let $S \in \SS_{n,k}$ be a GSAW. We can break apart $S$ to form the frames $F_1, \ldots, F_{k}$ in the following way. The edges of $F_i$ come from the edges in $S$ whose vertices both have $x$-coordinates $i-1$ and $i$. These edges form some set of paths, which we need to distribute into ordered segments. Whenever two paths are connected by edges in $S$ that all have $x$-coordinate at most $i$, those paths are part of the same segment; otherwise, they are in different segments. The order of the segments and the order of the paths within each segment correspond to the order that their edges are traversed when following $S$ from start to end. Define the walk $W = F_1 \to \cdots \to F_{k}$. By design, each of these edges is an edge in $D_h$, $F_1$ is a neighbor of the designated start vertex, $F_{k}$ is an accepting vertex, and the weight of $W$ is $x^ny^k$ where $n$ is the number of edges in $S$. Thus, $W \in \WW_{n,k}$ and $\Phi(W) = S$.
\end{proof}

\section{Non-Probabilistic Results}
\label{section:non-prob-results}

For each grid graph $\GG_h$, the construction in the previous section provides us with a directed weighted graph $D_h$ and with a designated start vertex and accepting vertices to which we can apply the transfer matrix method to compute the generating function for its valid walks, which according to Theorem~\ref{theorem:bijection} is also the generating function counting GSAWs on $\GG_h$ starting in the top-left corner according to their length and displacement.

\begin{theorem}
	For any $h \geq 1$, the generating function
	\[
		f_h(x,y) = \sum_{W \in W_{(0,h-1)}(\GG_h)} x^{|W|}y^{\|W\|}
	\]
	that counts GSAWs on $\GG_h$ that start in the top-left corner by length and displacement is rational.
\end{theorem}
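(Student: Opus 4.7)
The plan is to combine the construction of $D_h$ from Section~\ref{subsection:non-prob-construction} with the transfer matrix method recalled in Section~\ref{subsection:finite-state-machines}, using Theorem~\ref{theorem:bijection} to translate the resulting rational generating function for walks in $D_h$ into the desired generating function for GSAWs. First I would dispose of the degenerate case $h = 1$: on $\GG_1 = \N \times \{0\}$ the walk starting at $(0,0)$ proceeds deterministically to the right forever and never becomes trapped, so $W_{(0,0)}(\GG_1) = \emptyset$ and $f_1(x,y) = 0$, which is trivially rational.

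For $h \geq 2$, the central observation is that the vertex set of $D_h$ is finite. Every vertex is a width-$2$, height-$h$ frame living on a fixed set of $2h$ vertices and admitting at most $h + 2(h-1) = 3h - 2$ possible edges, so there are at most $2^{3h-2}$ underlying edge sets. For each such edge set, the extra data encoded in a frame (a partition of the resulting paths into an ordered list of segments, together with an ordering of the paths inside each segment and a directionality on each path) is a bounded amount of combinatorial information. Hence $|V(D_h)|$ is bounded by a function of $h$ alone, and likewise the edge set of $D_h$ is finite.

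Next I would observe that every edge of $D_h$ carries a weight of the form $x^n y$ with $0 \le n \le 3h-2$: the factor $y$ records the one-column rightward advance when a frame is trimmed, and $n$ counts the new edges added during the extension step. Consequently the transition matrix $M$ of $D_h$ lies in $\Z[x,y]$ and every nonzero entry is divisible by $y$, so $\det(I - M)$ has constant term $1$ and $I - M$ is invertible over $\mathbb{Q}(x,y)$. The transfer matrix method then gives that
\[
    \sum_{t \in A} (I - M)^{-1}_{s,t}
\]
is a rational function in $x$ and $y$, where $s$ is the added start vertex and $A$ is the set of accepting frames described at the end of Section~\ref{subsection:non-prob-construction}. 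By Theorem~\ref{theorem:bijection}, this sum equals $f_h(x,y)$, completing the proof.

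The only real obstacle is the finiteness of $V(D_h)$; once one notes that a frame is a finite amount of data attached to a fixed $2 \times h$ grid, the rest is a direct invocation of standard machinery. Everything else (invertibility of $I - M$, rationality of the entries of $(I-M)^{-1}$, and the identification with $f_h$) is automatic from the construction and the bijection theorem.
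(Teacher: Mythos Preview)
Your proposal is correct and follows essentially the same approach as the paper: the paper does not give a separate proof of this theorem at all, but simply states it as an immediate consequence of the construction of $D_h$ in Section~\ref{section:fsm-non-prob}, Theorem~\ref{theorem:bijection}, and the transfer matrix method from Section~\ref{subsection:finite-state-machines}. You have spelled out the details the paper leaves implicit---the finiteness of the set of width-$2$ frames, the divisibility of every edge weight by $y$ ensuring invertibility of $I-M$, and the degenerate case $h=1$ (which the paper's construction, stated for $h \geq 2$, does not address).
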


To carry out the transfer matrix method, one inverts a matrix $M$ whose width and height are equal to the number of vertices in $D_h$.\footnote{It actually suffices to solve a system of the form $Ax=b$, a simpler but still computationally intensive task.} This must be done exactly, not numerically. The number of vertices in $D_h$ grows quickly as we consider taller and taller grid graphs $\GG_h$, but thankfully some simplification is possible. It is well known that given a finite state machine $M_1$, one can compute the unique finite state machine $M_2$ with a minimal number of states that accepts precisely the same set of words. This process is called \emph{minimization}. For the combinatorial finite state machines we have constructed here (i.e., the directed graphs), a similar process can be done, shrinking the dimensions of the matrix $M$ and allowing for faster computation, though the result is not unique. We use a Python package of the second author~\cite{FiniteStateMachinesbibtex} to perform these computations. Table~\ref{table:num-states} on page~\pageref{table:num-states} shows the number of vertices and edges before and after minimization for the finite state machines that we construct. The Github repository~\cite{SelfAvoidingStripWalks-repo} contains all of the Python code used to generate the directed graphs $D_h$ and compute their generating functions.

\subsection{Height 2}

Applying the transfer matrix method to minimization of the graph $D_2$ for GSAWs on the graph $\GG_2$, we find their generating function to be
\begin{align*}
	f_2(x,y) &= \frac{x^3y(1-xy)}{(1-x^2y)(1-xy-x^2y)}\\
	&= x^{3} y+2 x^{5} y^{2}+\left(3 x^{7}+x^{6}\right) y^{3}+\left(4 x^{9}+3 x^{8}+x^{7}\right) y^{4} +\left(5 x^{11}+6 x^{10}+4 x^{9}+x^{8}\right) y^{5} + \cdots\\
	&= y x^{3}+2 y^{2} x^{5}+y^{3} x^{6}+\left(y^{4}+3 y^{3}\right) x^{7}+\left(y^{5}+3 y^{4}\right) x^{8}+\left(y^{6}+4 y^{5}+4 y^{4}\right) x^{9}+ \cdots .
\end{align*}
We can set $y=1$ to find the univariate generating function counting only by length:
\[
	f_2(x, 1) = \frac{x^{3}}{\left(1+x\right) \left(1-x-x^2\right)} = x^3 + 2x^5 + x^6 + 4x^7 + 4x^8 + 9x^9 + 12x^{10}\cdots.
\] 

This tells us that on the grid graph of height $2$, there is a single GSAW of length $3$, two of length $5$, one of length $6$, four of length $7$, and so on. This sequence is (up to a shift) A008346 in the OEIS~\cite{oeis}.

From a generating function, one can obtain a constant-term recurrence for this sequence. Letting $a(n)$ denote the coefficient of $x^n$ in $f_2(x,1)$, we find the formula
\[
	a(n) = 2a(n-2) + a(n-3)
\]
with initial conditions $a(0) = a(1) = a(2) = 0$ and $a(3) = 1$. For any rational generating function, it is also possible to write down an explicit formula for the coefficients but it is expressed as a sum over the roots of the denominator and can be cumbersome, so we do not reproduce it here.

It is also possible to determine the asymptotic growth of the coefficients of any rational generating function (see~\cite[Theorem IV.9]{flajolet:ac}). We calculate that
\[
	a(n) \sim C \cdot \mu^n, \qquad
	C = 1 - \frac{2}{\sqrt{5}} \approx 0.106, \qquad
	\mu = \frac{1+\sqrt{5}}{2} \approx 1.618.
\]
We can also set $x=1$ to find the univariate generating function counting only by displacement:
\[
	f_2(1, y) = \frac{y}{1-2y} = y + 2y^2 + 4y^3 + 8y^4 + 16y^5 + \cdots.
\]
It is easy to see independently that there are $2^{k-1}$ GSAWs with displacement $k$ on the height $2$ half-infinite grid graph. Each walk $W$ with displacement $k-1$ can be extended to two walks $W_1$ and $W_2$ with displacement $k$: $W_1$ is formed by shifting $W$ one unit to the right, prepending the step $(0,1) \to (1,1)$ and if $W$ ends at $(0,0)$ then appending the step $(1,0) \to (0,0)$ to the end of $W_1$; $W_2$ is formed by shifting $W$ one unit to the right, flipping it vertically, the prepending the steps $(0,1) \to (0,0) \to (1,0)$. The two extensions of each walk are different and distinct from the extensions of each other walk, and so there are twice as many walks of displacement $k$ as of displacement $k-1$.

\subsection{Height 3}

The generating function for GSAWs on the graph $\GG_3$ is
\begin{align*}
	f_3(x,y) &= \frac{x^4y \cdot p(x,y)}{q_1(x,y) \cdot q_2(x,y)}
\end{align*}
where
\begin{align*}
	p(x,y) = 1&+x -x y -3 x^{2} y + 2 x^{3} y^{2} + 2 x^{4} y -2 x^{4} y^{2} -x^{5} y^{2} + 2 x^{6} y^{3} -x^{7} y^{2} + x^{7} y^{3} -x^{7} y^{4} -3 x^{8} y^{3}\\
		& + x^{8} y^{4} -3 x^{9} y^{4} -x^{10} y^{4} + x^{10} y^{5} + 4 x^{11} y^{4} -x^{11} y^{5} +2 x^{13} y^{5},\\
	q_1(x,y) &= 1 -2 x y + x^{2} y^{2} -2 x^{3} y + x^{4} y^{2} -x^{5} y^{3} + 2 x^{7} y^{3},\\
	q_2(x,y)&= 1- 2 x^{2} y + x^{4} y^{2} - 2 x^{6} y^{2}.
\end{align*}

We can set $y=1$ to find the univariate generating function counting only by length:
\begin{align*}
	f_3(x, 1) &= \frac{x^{4}\left(1+x -2 x^{2}-x^{5}+x^{6}-2 x^{8}-5 x^{9}-5 x^{10}-2 x^{11}-2 x^{12}\right) }{\left(1-x -2 x^{3}-x^{4}-2 x^{5}-2 x^{6}\right) \left(1-2 x^{2}\right) \left(1+x^{4}\right)}\\[5pt]
	&= x^4 + 2x^5 + 2x^6 + 6x^7 + 10x^8 + \cdots.
\end{align*}

We have added this sequence as A374297 in the OEIS. Letting $a(n)$ denote the number of such GSAWs of length $n$, we find the recurrence
\begin{align*}
	a(n) &= a(n-1) + 2a(n-2) - a(n-5)+2a(n-6)-4a(n-7)\\
	& \qquad -3a(n-8)-2a(n-9)-4a(n-11)-4a(n-12)
\end{align*}
with the appropriate initial conditions.

Let $\alpha$ denote the root of $1-x -2 x^{3}-x^{4}-2 x^{5}-2 x^{6}$ near $0.522$. Then,
\[
	a(n) \sim C \cdot \mu^n,
\]
where $C \approx 0.061$ is a rational function of $\alpha$ and $\mu = 1/\alpha \approx 1.915$.

We can also set $x=1$ to find the univariate generating function counting only by displacement:
\[
	f_3(1, y) =\frac{2 y \left(1+y\right)(1-y-y^3)}{(1-3y-y^2)(1-2y-y^2)} = 2 y +10 y^{2}+40 y^{3}+148 y^{4}+526 y^{5} + \cdots.
\]

We have added this sequence as A374298 in the OEIS.

\subsection{Height 4}

The generating function for GSAWs on the graph $\GG_4$ is
\[
	f_4(x,y) = \frac{x^5y \cdot p(x,y)}{q_1(x,y) \cdot q_2(x,y)}
\]
where $p$, $	q_1$, and $q_2$ have total degrees $51$, $25$, and $26$ respectively. The polynomials are too large to print here, but they can be found in an ancillary file attached to the arXiv version of this article and in the Github repository~\cite{SelfAvoidingStripWalks-repo}.

We can set $y=1$ to find the univariate generating function counting only by length:
\[
	f_4(x,1) = \frac{x^5 \cdot r(x)}{s_1(x) \cdot s_2(x)}
\]
where
\begin{align*}
	r(x) &= 3-4 x -7 x^{2}-3 x^{3}+21 x^{4}+19 x^{5}-10 x^{6}-45 x^{7}-22 x^{8}+17 x^{9}+55 x^{10}+17 x^{11}\\
	& \qquad -28 x^{12}+21 x^{13}-5 x^{14}+74 x^{15}-143 x^{16}-57 x^{17}-98 x^{18}+150 x^{19}+208 x^{20}\\
	& \qquad -72 x^{21}-91 x^{22}-246 x^{23}+144 x^{24}+181 x^{25}+66 x^{26}+11 x^{27}-242 x^{28}\\
	& \qquad +38 x^{29}+49 x^{30}-3 x^{31}+38 x^{32}-107 x^{33}+12 x^{34}+45 x^{35}+18 x^{36}+20 x^{37}\\
	& \qquad -14 x^{38}-12 x^{39},\\
	s_1(x) &= 1-2 x -3 x^{3}+6 x^{4}-x^{5}+2 x^{6}-8 x^{7}+3 x^{8}+10 x^{9}+3 x^{10}-8 x^{11}-17 x^{12}+7 x^{13},\\
	&\qquad +8 x^{14}+5 x^{15}-6 x^{16}-7 x^{17}+2 x^{18}+2 x^{19}\\
	s_2(x) &= 1-4 x^{2}+3 x^{4}+2 x^{6}-4 x^{8}+2 x^{10}-x^{12}+8 x^{14}-5 x^{16}-2 x^{18}+4 x^{20}.
\end{align*}

The series expansion starts
\[
	f_4(x,1) = 3 x^{5}+2 x^{6}+9 x^{7}+8 x^{8}+36 x^{9}+ \cdots,
\]
which we have added as sequence A374299 in the OEIS. The recurrence for $a(n)$ involves the terms from $a(n-1)$ to $a(n-39)$, so we will not reproduce it here.

Let $\alpha$ denote the root of $s_1(x)$ near $0.479$. Then,
\[
	a(n) \sim C \cdot \mu^n,
\]
where $C \approx 0.042$ is a rational function of $\alpha$ and $\mu = 1/\alpha \approx 2.087$.

We can also set $x=1$ to find the univariate generating function counting only by displacement:
{\footnotesize
\begin{align*}
	f_4(1, y) &= \frac{y(5-31 y+85 y^{2}-132 y^{3}+112 y^{4}+93 y^{5}-485 y^{6}+552 y^{7}-119 y^{8}-205 y^{9}+138 y^{10}-4 y^{11}-11 y^{12})}{(1-7 y+12 y^{2}-7 y^{3}+3 y^{4}+2 y^{5})(1-8 y+15 y^{2}-5 y^{3}-9 y^{4}+2 y^{5}+y^{6})}\\
	&= 5 y +44 y^{2}+330 y^{3}+2231 y^{4}+14234 y^{5}+87670 y^{6}+526549 y^{7}+3105097 y^{8}+18061476 y^{9}+ \cdots.
\end{align*}
}

We have added this sequence as A374300 in the OEIS.

\subsection{Height 5}

The directed graph that we create to enumerate GSAWs on the graph $\GG_5$ has 7,201 vertices and 78,408 edges before minimization and 152 vertices and 2,142 edges after minimization. We find the generating function to be
\[
	f_5(x,y) = \frac{x^5y \cdot p(x,y)}{q_1(x,y) \cdot q_2(x,y)}
\]
where $p$, $	q_1$, and $q_2$ have total degrees $151$, $66$, and $84$ respectively. The polynomials are too large to print here, but they can be found in an ancillary file attached to the arXiv version of this article and in the Github repository~\cite{SelfAvoidingStripWalks-repo}.

We can set $y=1$ to find the univariate generating function counting only by length:
\[
	f_5(x,1) = \frac{x^5 \cdot r(x)}{s_1(x) \cdot s_2(x)}
\]
where $r$, $s_1$, and $s_2$ are irreducible polynomials with degrees $123$, $68$, and $54$, respectively. The series expansion begins
\[
	f_5(x,1) = 2x^5 + 3x^6 + 8x^7 + 13x^8 + 32x^9 + \cdots.
\]
We have added this as sequence A374301 in the OEIS. The recurrence for $a(n)$ involves the terms $a(n-1)$ to $a(n-122)$.

Let $\alpha$ denote the root of the denominator of $f_5(x,1)$ near $0.454$. Then,
\[
	a(n) \sim C \cdot \mu^n,
\]
where $C \approx 0.033$ is a rational function of $\alpha$ and $\mu = 1/\alpha \approx 2.199$.

We can also set $x=1$ to find the univariate generating function counting only by displacement:
\[
	f_5(1, y) = \frac{y \cdot u(y)}{v_1(y) \cdot v_2(y)}
\]
where
\begin{align*}
	u(y) = 11&-169 y+1221 y^{2}-5279 y^{3}+13594 y^{4}-11881 y^{5}-57814 y^{6}+247985 y^{7}-333951 y^{8}\\
			&-181528 y^{9}+986287 y^{10}-622581 y^{11}-918040 y^{12}+1369959 y^{13}-170227 y^{14}\\
			&-734832 y^{15}+861370 y^{16}-531077 y^{17}-591839 y^{18}+825566 y^{19}+302374 y^{20}\\
			&-441374 y^{21}-167471 y^{22}+107296 y^{23}+76977 y^{24}+16189 y^{25}+970 y^{26}\\
	v_1(y) = \phantom{1}1&-13 y+45 y^{2}-66 y^{3}+17 y^{4}+209 y^{5}-151 y^{6}-140 y^{7}+112 y^{8}+48 y^{9}-50 y^{10}-28 y^{11}\\
	v_2(y) = \phantom{1}1&-18 y+105 y^{2}-237 y^{3}+27 y^{4}+650 y^{5}-500 y^{6}-362 y^{7}+260 y^{8}+96 y^{9}+118 y^{10}\\
		&-130 y^{11}-74 y^{12}-23 y^{13}-3 y^{14}.
\end{align*}
The series expansion begins
\[
	f_5(1,y) = 11 y +172 y^{2}+2329 y^{3}+28130 y^{4}+318086 y^{5}+3454914 y^{6}+36484161 y^{7}+\cdots.
\]
We have added this as sequence A374302 in the OEIS.

\subsection{Height 6}

The directed graph that we create to enumerate GSAWs on the graph $\GG_6$ has 80,378 vertices and 1,518,388 edges before minimization and 573 vertices and 17,144 edges after minimization. We find the generating function to be
\[
	f_6(x,y) = \frac{x^5y \cdot p(x,y)}{q_1(x,y) \cdot q_2(x,y)}
\]
where the total degrees of $p$, $q_1$, and $q_2$ are $443$, $199$, and $240$ respectively. 

We can set $y=1$ to find the univariate generating function counting only by length:
\[
	f_6(x,1) = \frac{x^5 \cdot r(x)}{s_1(x) \cdot s_2(x)}
\]
where $r$, $s_1$, and $s_2$ are irreducible polynomials with degrees $374$, $168$, and $202$, respectively. The series expansion begins
\[
	f_6(x,1) = 2x^5 + 2x^6 + 9x^7 + 10x^8 + 40x^9 + \cdots.
\]
and can be found in the OEIS entry A374303. The recurrence for $a(n)$ involves the terms $a(n-1)$ to $a(n-370)$.

Let $\alpha$ denote the root of the denominator of $f_6(x,1)$ near $0.439$. Then,
\[
	a(n) \sim C \cdot \mu^n,
\]
where $C \approx 0.027$ is a rational function of $\alpha$ and $\mu = 1/\alpha \approx 2.276$.

We can also set $x=1$ to find the univariate generating function counting only by displacement:
\[
	f_6(1, y) = \frac{y \cdot u(y)}{v_1(y) \cdot v_2(y)}
\]
where $u$, $v_1$, and $v_2$ have degrees $69$, $31$, and $38$ respectively, and the series expansion starts
\[
	f_6(1,y) = 23 y +629 y^{2}+15134 y^{3}+323031 y^{4}+6428665 y^{5}+122523673 y^{6}+2267420832 y^{7}+ \cdots 
\]
This is sequence A374304 in the OEIS.

\subsection{Height 7}

For height $7$, the directed graph $D_7$ has $954,791$ vertices before minimizing and $2311$ vertices after. We are not able to find the full solution $f_7(x,y)$ but we can find the specializations $f_7(x,1)$ and $f_7(1,y)$.\footnote{Although Maple's built-in linear algebra methods were not able to perform the necessary calculations, a custom script we wrote succeeded after three days.} We can write
\[
	f_7(x,1) = \frac{x^5 r(x)}{s_1(x)s_2(x)}
\]
where $r(x)$, $s_1(x)$, and $s_2(x)$ have degrees $1071$, $436$, and $628$. The series expansion begins
\[
	f_7(x,1) = 2 x^{5}+2 x^{6}+8 x^{7}+11 x^{8}+34 x^{9}+70 x^{10}+\cdots
\]
and we have added this as sequence A374305 in the OEIS. Letting $\alpha$ denote the root of the denominator near $0.429$, we have
\[
	a(n) \sim C \cdot \mu^n
\]
where $C \approx 0.024$ is a rational function of $\alpha$ and $\mu = 1/\alpha \approx 2.332$.

We can also write
\[
	f_7(1,y) = \frac{y \cdot u(y)}{v_1(y) \cdot v_2(y)}
\]
where $u$, $v_1$, and $v_2$ have degrees $162$, $65$, and $96$ respectively. The series expansion starts
\[
	f_7(1,y) = 47 y +2221 y^{2}+94006 y^{3}+3527224 y^{4}+123159829 y^{5}+4110628551 y^{6}+\cdots,
\]
and this is sequence A374306 in the OEIS.

We note that for heights 2 through 6, the connective constants $\mu$ that we computed are the same as those found by Klein \cite{klein:saws} in an enumeration of confined self-avoiding walks.

\section{The Probabilistic Finite State Machine Constructions}
\label{section:fsm-prob}

Klotz and Sullivan~\cite{klotz:gsaws1} studied GSAWs on the height $2$ grid graph from a probabilistic viewpoint. Each time a GSAW takes a step, it chooses from among its neighbors according to a probability distribution. Then, the probability of the entire GSAW is the product of the probabilities of each step. In this way we can measure how some GSAWs are more likely to be the outcome of this process than others.

In the first of the probability distributions considered by Klotz and Sullivan, the next vertex visited by a GSAW is chosen uniformly among all open neighbors. For example, the GSAW shown in Figure~\ref{figure:first-prob-example} on page~\pageref{figure:first-prob-example} has probability $1/576$ of occurring among all GSAWs on $\GG_3$.

\begin{figure}
	\begin{center}
		\begin{tikzpicture}[
			scale=0.75,
			baseline=(current bounding box.center),
		]
			\draw[gray] (0,0) grid (5.5,4);
			\foreach \x in {0,1,2,3,4,5} { \foreach \y in {0,1,2,3,4} {
				\draw[gray, fill=white] (\x,\y) circle (1.9pt);
			}}
			
			\def\points{(0,4),(1,4),(2,4),(3,4),(4,4),(4,3),(4,2),(3,2),(3,3),(2,3),(1,3),(0,3),(0,2),(0,1),(0,0),(1,0),(2,0),(2,1)}

			\draw[ultra thick] (0,4)
				\foreach \point in \points {
					-- \point
				};
			\foreach \point in \points {
				\draw[black, fill] \point circle (2pt);
			}
			
			\node[above right] at (2,1) {$v$};
  		\end{tikzpicture}
	\end{center}
	\caption{A partially built GSAW on $\GG_5$.}
	\label{figure:prob-ex-2}
\end{figure}

The second probability distribution assigns each neighbor an \emph{energy} depending on how many of its neighbors are vertices already in the GSAW, and then chooses among the neighbors with a probability corresponding to the energies. A preference for situating near already-occupied sites mimics the behavior of polymers in poor-solvent conditions, in which non-solvation between the polymer and its solvent is resolved by having the polymer condense into a tight globule. Recently, Hooper and Klotz~\cite{hooper:trapping-saws} explored the effect of neighbor attraction of GSAW statistics, finding a global minimum in the mean trapping length as a function of the attraction strength. 

Let $C$ be a positive constant, and let $N(v)$ denote the unvisited neighbors of a vertex $v$. If $v$ is the endpoint of a partially built GSAW $G$ and $w \in N(v)$, then the energy of $w$, denoted $E(w)$, is $C^\ell$, where $\ell$ is the number of neighbors of $w$ that are part of $G$, not counting $v$. For example, consider the vertex $v = (2,1)$ in Figure~\ref{figure:prob-ex-2}. The unvisited neighbors of $v$ are $N(v) = \{(1,1), (2,2), (3,1)\}$, and the energies of these unvisited neighbors are $C^2$, $C^2$, and $C$, respectively. The probability that a vertex $w \in N(v)$ is the next vertex visited by the GSAW is the ratio of its energy to the total energy of all unvisited neighbors of $v$:
\[
	\frac{E(w)}{\ds\sum_{w' \in N(v)}E(w')}.
\]
For example the probability that $(1,1)$ is the next vertex visited in the GSAW shown in Figure~\ref{figure:prob-ex-2} is $C^2/(2C^2+C) = C/(2C+1)$. The total probability of a GSAW is defined to be the product of the probabilities of each step.  While greater positive values of $C$ can model attraction between segments of a polymer due to solvent effects, negative values may represent repulsive forces stronger than simple hard-sphere repulsion, such as interaction between like-charged monomers.

We call these two probability models the \emph{uniform model} and the \emph{energetic model}. Note that the energetic model is a generalization of the uniform model, which can be recovered from the energetic model by setting $C=1$. Our finite state machine construction can be adapted to permit the computation of the generating functions for GSAWs on half-infinite grid graphs of bounded height under both of these models.

\subsection{The Uniform Model}

In Subsection~\ref{subsection:non-prob-construction} we described how a width $2$ frame $F$ could be extended to a set $E'(F)$ of extended width $3$ frames, which were then trimmed down to form the set $N(F)$ of width $2$ frames that are the neighbors of $F$ in the directed graph being built. For a frame $F$ and its neighbor $F' \in N(F)$ we previously assigned the edge $F \to F'$ a weight $x^ky$ where $k$ is the number of edges added while extending $F \to F'$. Now we must also incorporate probabilities into the edge weights. 

At first this seems like an impossible task because for edges $v_1 \to v_2$ in the rightmost column in $F'$, we do not yet know whether the neighbor to the right of $v_1$ is occupied. We have not yet built the frame containing that neighbor, but when we do, it might contain edges of the GSAW that come before the edge $v_1 \to v_2$. For example, the probability of the edge $e = (1,1) \to (1,2)$ in Figure~\ref{figure:uniform-ex-1} depends on whether the vertex to the right of $(1,1)$ is occupied by an edge that comes earlier in the GSAW than $e$. 

\begin{figure}
	\begin{center}
		\begin{tikzpicture}[
			scale=0.75,
			baseline=(current bounding box.center),
		]
			\draw[lightgray] (-0.5,-0.5) grid (1.5,4.5);
			\graydots{}
			\draw[ultra thick] (-0.5,-0.5) rectangle (1.5,4.5);
			
		    \draw[t1, a1, circ] (0,4) -- (1,4); 
		    \draw[t2, a1] (0,2) -- (0,1);
			\draw[t2, sq] (1,3) -- (0,3) -- (0,2) -- (0,1) -- (0,0) -- (1,0) -- (1,2);
		\end{tikzpicture}

	\end{center}
	\caption{An example of a frame. The probability of the edge $(1,1) \to (1,2)$ cannot be computed until this frame is extended.}
	\label{figure:uniform-ex-1}
\end{figure}
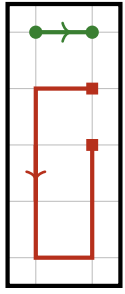

To remedy this, we instead account, in each transition $F \to F'$, for the probabilities of the edges whose start vertex is in the right column of $F$ (which is the left column of $F'$). To do this, when we have extended $F$ to some width $3$ frame in $E'(F)$, but before trimming it, we can see all neighbors of vertices in the middle column, and therefore compute the probabilities of all edges that originate in the middle column (regardless of whether they terminate in the left, middle, or right columns). The probabilities of these edges must be calculated in the order in which they appear in the GSAW; in other words, we must consider the segments in order, and the paths of each segment in order, and the edges of each path in order. We thus calculate the product of the probabilities of all such edges, and multiply this by the existing edge weight accounting for the number of new edges added in the transition. Figure~\ref{figure:uniform-ex-2} shows an example of this procedure. When $F'$ is a frame corresponding to an accepting vertex (i.e., we have placed all edges of the GSAW), we also incorporate the probabilities of edges originating in the right column.

\begin{figure}
	\begin{center}
		\begin{tikzpicture}[
			scale=0.75,
			baseline=(current bounding box.center),
		]
			\draw[lightgray] (-0.5,-0.5) grid (2.5,4.5);
			\draw[ultra thick] (-0.5,-0.5) rectangle (2.5,4.5);
			
		    \draw[t1, circ] (0,4) -- (2,4);
		    \draw[t1, a1] (1.09, 4) -- (1.1,4);
		    \draw[t2, sq] (2,0) -- (0,0);
		    \draw[t2, a1] (0.91,0) -- (0.9,0);
		    \draw[t2, sq] (0,1) -- (1,1) -- (1,2) -- (0,2);
		    \draw[t2, a2] (1,1) -- (1,2);
		    \draw[t2, sq] (0,3) -- (2,3);
		    \draw[t2, a3] (1.25,3) -- (1.26,3);
		    \draw[t3, diam, a1] (2,1) -- (2,2);		      
		\end{tikzpicture}
		\quad
	\end{center}
	\caption{The edges that originate in the middle column are, in the order they appear in the GSAW, $(1,4) \to (2,4)$, $(1,0) \to (0,0)$, $(1,1) \to (1,2)$, $(1,2) \to (0,2)$, and $(1,3) \to (2,3)$. The probability of $(1,4) \to (2,4)$ is $1/2$ because both neighbors $(1,3)$ and $(2,4)$ are unoccupied at this point. The probability of $(1,0) \to (0,0)$ is similarly $1/2$. The probability of $(1,1) \to (1,2)$ is $1/2$ because the unoccupied neighbors of $(1,1)$ are $(2,1)$ and $(1,2)$, as $(1,0)$ is now occupied. The probability of $(1,2) \to (0,2)$ is $1/3$ because $(1,2)$ has three unoccupied neighbors. Finally, the probability of $(1,3) \to (2,3)$ is $1$ because at this point $(2,3)$ is the only unvisited neighbor of $(1,3)$.}
	\label{figure:uniform-ex-2}
\end{figure}
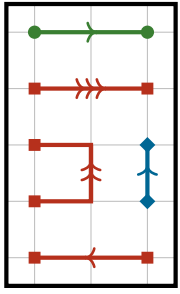

As a result, for each grid graph $\GG_h$ we can form a graph $D_h^u$ with the same vertex set, edge set, start vertex, and accepting vertices as $D_h$, but whose edge weights are each multiplied by a rational number corresponding to the probability of that transition. In Section~\ref{section:prob-results} we will apply the transfer matrix method to these graphs to produce generating functions that count GSAWs according to the uniform model.

\begin{theorem}
	Let $h \geq 1$ and let $p_h^U(W)$ be the probability that a GSAW $W$ on $\GG_h$ occurs under the uniform model. Then, the generating function
	\[
		f^U_h(x,y) = \sum_{W \in W_{(0,h-1)}(\GG_h)} p^U_h(W)x^{|W|}y^{\|W\|}
	\]
	that counts GSAWs on $\GG_h$ that start in the top-left corner by length and displacement is rational.
\end{theorem}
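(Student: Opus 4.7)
The plan is to lift Theorem~\ref{theorem:bijection} to the probabilistic setting. The graph $D_h^u$ shares its vertex set, edge set, start vertex, and accepting vertices with $D_h$, so the bijection $\Phi$ from walks in $D_h$ (from the start vertex to an accepting vertex) to GSAWs in $W_{(0,h-1)}(\GG_h)$ immediately yields a bijection between walks in $D_h^u$ and the same family of GSAWs. What remains is to show that under this bijection, the weight of each walk $W^\ast = F_1 \to \cdots \to F_k$ in $D_h^u$ equals $p_h^U(W)\,x^{|W|}y^{\|W\|}$, where $W = \Phi(W^\ast)$. The $x^{|W|}y^{\|W\|}$ portion is inherited from Theorem~\ref{theorem:bijection}, since the added probability factors are rational numbers that do not affect the $x$- or $y$-exponents. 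Once the weight correspondence is in place, rationality follows by applying the transfer matrix method to $D_h^u$: every nonzero entry of the weighted adjacency matrix $M$ lies in $\mathbb{Q}(x,y)$ with no constant term in $x$, so $I - M$ is invertible over $\mathbb{Q}(x,y)$ and the row/column sum of $(I - M)^{-1}$ giving $f_h^U(x,y)$ is a rational function.

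Next I would verify that the probability factors multiply correctly across the walk. By construction, each transition $F_i \to F_{i+1}$ of $D_h^u$ attaches the probability factors for precisely those edges of $W$ whose starting vertex lies in the right column of $F_i$, and the accepting transition additionally handles edges originating in the right column of $F_k$. Since each edge of $W$ has a unique starting vertex with a specific $x$-coordinate, these contributions partition the edges of $W$. Within each transition they are multiplied in the order prescribed by the segment-and-path ordering of the extended width-$3$ frame, which by construction of $\Phi$ matches the order in which $W$ traverses those edges.

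The main obstacle is the subtlest step: confirming that the denominator used in each probability factor equals the true number of unvisited neighbors at the corresponding moment along $W$. Here the width-$3$ extended frame is essential. For an edge $v \to w$ whose starting vertex $v$ lies in the middle column, every neighbor of $v$ lies in one of the three visible columns, so its occupation status is observable from the extended frame. Whether a given neighbor has already been visited is determined by (i) the edges appearing in $F_1, \ldots, F_{i-1}$, which lie strictly to the left of $v$, (ii) the portion of the extended frame that the segment-and-path ordering places earlier than $v \to w$, and (iii) the rule from Subsection~\ref{subsection:non-prob-construction} that segments in a frame share no vertex. A careful case check against the extension rules shows that the number of unvisited neighbors computed within $D_h^u$ matches the true number along $W$ at that step.

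With the denominator and edge-partition facts in hand, the probability factors multiply to exactly $\prod_{v\to w\in W} 1/|N(v)|$, which is $p_h^U(W)$ under the uniform model. Combined with the $x^{|W|}y^{\|W\|}$ contribution, this shows $\Phi$ is a weight-preserving bijection between walks in $D_h^u$ and the terms of $f_h^U(x,y)$. Applying the transfer matrix method then produces $f_h^U(x,y)$ as a rational function in $\mathbb{Q}(x,y)$, as claimed.
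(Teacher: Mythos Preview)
Your proposal is correct and follows essentially the same approach as the paper. In fact, the paper does not give a separate proof of this theorem at all: it simply states the result after describing the construction of $D_h^u$ in Subsection~4.1, treating the construction itself (together with Theorem~\ref{theorem:bijection} and the transfer matrix method) as the justification. You have made explicit the implicit reasoning---that the bijection $\Phi$ carries over unchanged, that the probability factors partition the edges of the GSAW by the $x$-coordinate of their starting vertex, and that the width-$3$ extended frame contains enough information to compute each uniform-model probability correctly. One small imprecision: your point~(i) about ``edges appearing in $F_1,\ldots,F_{i-1}$'' is not really what determines the occupation status of a neighbor of $v$ at the moment $v\to w$ is traversed; all of that information is already encoded in the segment-and-path ordering of the current extended frame (your point~(ii)), since a neighbor in column $i-1$ may well be visited \emph{after} $v$ in the GSAW. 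This does not affect correctness, only the exposition.
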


\subsection{The Energetic Model}

In the energetic model, the probability of moving to a new vertex depends not just on whether the neighbors of the new vertex are already in the GSAW, but whether the neighbors of the neighbors are in the GSAW. For this reason, the probabilities of each edge cannot be computed using width $2$ frames, which during the extension process only have $3$ columns. Instead we must work entirely with width $4$ frames so that during extension we see $5$ columns. During the extension process we can compute the energetic probabilities of all edges originating in the middle column because we have within the extended frame all neighbors of neighbors.

Working with width $4$ frames is a significant computational burden because there are many more width $4$ frames than width $2$ frames. (See Table~\ref{table:num-states} on page~\pageref{table:num-states} to compare the number of states required for each probabilistic model.) Despite this, the process of computing extensions is unchanged from the width $2$ case, and so we are able to obtain graphs $D_h^e$ for each grid graph $\GG_h$ (although with many more vertices than $D_h^u$) and apply the transfer matrix method to compute the corresponding generating functions. 

\begin{theorem}
	Let $h \geq 1$ and let $p_h^E(W)$ be the probability that a GSAW $W$ on $\GG_h$ occurs under the energetic model. Then, the generating function
	\[
		f^E_h(x,y,C) = \sum_{W \in W_{(0,h-1)}(\GG_h)} p^E_h(W)x^{|W|}y^{\|W\|}
	\]
	that counts GSAWs on $\GG_h$ that start in the top-left corner by length and displacement is rational.
\end{theorem}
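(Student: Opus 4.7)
The plan is to mirror the uniform-model argument of the previous subsection, with the essential modification that the finite state machine is built from width-$4$ frames rather than width-$2$ frames. This enlargement is forced by the fact that, in the energetic model, the probability of stepping from $v$ to $w$ depends on $E(w) = C^{\ell}$, where $\ell$ counts the already-occupied neighbors of $w$ (excluding $v$). If $w$ sits in the right column of a width-$2$ frame, some of its neighbors lie one column to the right and have not yet been placed, so their occupancy is unknown. Working with width-$4$ frames and extending them to width $5$ places every neighbor of every middle-column vertex safely inside the visible window.

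First, I would redefine a frame to carry exactly the same combinatorial data as before (an ordered list of segments, each an ordered list of directed paths sharing no vertices), but on a $4 \times h$ rectangle, and construct a directed graph $D_h^E$ whose vertices are the (finitely many) valid width-$4$ frames. The start vertex, accepting vertices, extension/merging/insertion rules, trapping condition, and trimming step all carry over verbatim from Subsection~\ref{subsection:non-prob-construction}, only on a wider frame. The bijection between walks in $D_h^E$ from the start vertex to an accepting vertex and GSAWs on $\GG_h$ goes through exactly as in Theorem~\ref{theorem:bijection}, with the ``overlapping columns'' identification now gluing three columns of each consecutive pair of frames instead of one.

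Second, I would define the weight on an edge $F \to F'$ of $D_h^E$ to be $x^k y$ (with $k$ the number of newly placed GSAW-edges) multiplied by the energetic probabilities of precisely those newly placed edges whose tail lies in the \emph{middle column} of the intermediate width-$5$ extension; for the final transition into an accepting vertex, one additionally multiplies in the probabilities of edges originating in the rightmost column, exactly as in the uniform case. Because the intermediate frame shows $5$ consecutive columns, every neighbor of the tail and every neighbor-of-neighbor required to evaluate $E(w)$ lies inside the frame, and the ordering of segments and of paths within segments encoded in the frame pins down exactly which of those vertices are occupied at the moment the edge is traversed. Each per-edge probability is a rational function of $C$ (a quotient of sums of powers of $C$), so every weight of $D_h^E$ lies in $\mathbb{Q}(C)[x,y]$.

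Finally, applying the transfer matrix method to $D_h^E$ yields $f^E_h(x,y,C) = \sum_{t \in A}(I - M)^{-1}_{s,t}$, a rational function over $\mathbb{Q}(C)$, hence a rational function in $x,y,C$. The main obstacle, and the one requiring real care, is the probability bookkeeping: one must check that every GSAW-edge contributes its probability to exactly one transition weight, that the ``middle-column tail'' rule together with the right-column rule at the accepting vertex partitions the edges of each GSAW, and that the frame data in fact determines which neighboring sites are already occupied at the instant a step is taken. Once this accounting is verified, rationality is automatic from the transfer matrix formula applied to the finite graph $D_h^E$.
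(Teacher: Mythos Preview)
Your approach is exactly the paper's: widen the frames from width $2$ to width $4$ so that the width-$5$ extension exposes all neighbors-of-neighbors needed to evaluate the energetic step probabilities, then run the transfer matrix method on the resulting finite graph $D_h^E$ with weights in $\mathbb{Q}(C)[x,y]$.

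One small correction to the bookkeeping you flag as the main obstacle: the probabilities attached to a transition $F \to F'$ are those of \emph{all} GSAW-edges with tail in the middle column of the width-$5$ extension, not only the newly placed ones (edges with tail in the middle column go to columns at distance at most one, so none of them are new); and at an accepting frame you must sweep up the edges with tails in the rightmost \emph{two} columns, since neither has yet served as a middle column. With that adjustment, each column is the middle column of exactly one extension, so the partition you describe is exact and the argument goes through.
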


\section{Probabilistic Results}
\label{section:prob-results}

\subsection{Height 2}
Applying the transfer matrix method to the graph with modified edge weights, we find the generating function for the uniform model on $\GG_2$ to be\footnote{Klotz and Sullivan~\cite{klotz:gsaws1} previously computed all of the height 2 results presented in this subsection.
}
\begin{align*}
	f^U_2(x,y) &= \frac{x^{3} y \left(2-x y\right)}{\left(2-x^{2} y\right)\left(8-4xy-4x^2y+x^3y^2\right)}\\
	&= \frac{1}{8}x^{3}y
		+\frac{1}{8} x^{5}y^{2} 
		+\frac{1}{64}  x^{6} y^{3}
		+\frac{3}{32} x^7y^{3}
		+\frac{1}{128} x^7y^{4}
		+ \frac{3}{128} x^8y^{4}
		+\frac{1}{256} x^8y^{5} + \cdots,
\end{align*}
which reveals that the probability that a walk becomes trapped with length $3$ and displacement $1$ is $\frac{1}{8}$ (in this case there is a single such walk), the probability a GSAW becomes trapped with length $5$ and displacement $2$ is also $\frac{1}{8}$ (in this case there are two walks, each with probability $\frac{1}{16}$), and so on. Note that $f_2(1,1) = 1$, demonstrating that a GSAW in this model has finite length and displacement with probability $1$.

As before, one can substitute either $x=1$ or $y=1$ to find univariate rational generating functions for GSAWs in this model by just length or just displacement, and then can turn these into recurrences, asymptotics, etc., but we will spare the reader these results. However, we can also use these generating functions to determine the expected value of the length or displacement of a GSAW, or in fact any moment of these two quantities. The expected length of a GSAW on $\GG_2$ is
\[
	\left(\frac{d}{dx}\;f^U_2(x, 1)\right)_{x=1} = 13
\]
and the expected displacement is
\[
	\left(\frac{d}{dy}\;f^U_2(1, y)\right)_{y=1} = 7.
\]
The corresponding variances are
\[
	\left(\frac{d^2}{dx^2}\;f^U_2(x, 1) + \frac{d}{dx}\;f^U_2(x, 1) - \left(\frac{d}{dx}\;f^U_2(x, 1)\right)^2\right)_{x=1} = 98
\]
and
\[
	\left(\frac{d^2}{dy^2}\;f^U_2(1, y) + \frac{d}{dy}\;f^U_2(1, y) - \left(\frac{d}{dy}\;f^U_2(1, y)\right)^2\right)_{y=1} = 40.
\]

The solution to the energetic model is harder to find because it requires the use of width-four frames instead of width-two frames, leading to many more vertices in the directed graph and thus a much larger matrix equation that must be solved. Whereas the directed graph for the uniform model on $\GG_2$ has only 12 vertices before minimization and 7 vertices after, the directed graph for the energetic model has 54 vertices before minimization and 24 vertices after. Even worse, the weights of the edges involve an extra variable $C$ which makes the symbolic computations more intensive.

We find the generating function for the energetic model to be
\[
	f^E_2(x,y,C) = \frac{Cx^3y \cdot p(x,y,C) }{2(C+1)(2-x^2y) \cdot q(x,y,C)}
\]
where
\begin{align*}
	p(x, y, C) &=
		4 \left(C^{2}+1\right) \left(C +1\right)^{2}
		-2 \left(C^{2}+1\right) \left(C +1\right)^{2} xy
		-4 C^{2} \left(C -1\right) \left(C +1\right) x^{2}y\\
		&\qquad
		+2 \left(C -1\right) \left(C +1\right) \left(C^{2}+1\right) x^{3}y^2
		-2 \left(C -1\right) \left(C +1\right) x^{4} y^{2}\\
		& \qquad
		-C \left(C -1\right) x^{4} y^{3}
		+C \left(C -1\right) x^{5}y^3\\
	q(x,y, C) &= 
		4 \left(C^{2}+1\right) \left(C +1\right)^{2}
		-2 \left(C^{2}+1\right) \left(C +1\right)^{2} xy
		-4 C \left(C +1\right) \left(C^{2}+1\right) x^{2} y\\
		&\qquad 
		+2 \left(C +1\right) \left(C^{3}+C -1\right) x^{3} y^{2}
		-C \left(C -1\right) x^{4} y^{3}.
\end{align*}

The series expansion begins
\[
	f_2^E(x,y, C) = \frac{C}{4(C+1)}x^3y + \frac{C \left(C +1\right)}{8 C^{2}+8}x^5y^2 + \frac{C^{2}}{8 \left(C^{2}+1\right) \left(C +1\right)^{2}}x^6y^3 + \cdots.
\]
The expected length is
\[
	\frac{4 C^{5}+36 C^{4}+59 C^{3}+51 C^{2}+42 C +16}{2 C \left(2 C^{2}+C +1\right) \left(C +1\right)},
\]
the expected displacement is
\[
	\frac{C^{4}+7 C^{3}+8 C^{2}+6 C +6}{C \left(2 C^{2}+C +1\right)},
\]
and the variances of length and displacement are
{ \small
\[
	\frac{48 C^{10}+288 C^{9}+1000 C^{8}+2292 C^{7}+3769 C^{6}+4864 C^{5}+4981 C^{4}+3998 C^{3}+2488 C^{2}+1104 C +256}{4 C^{2} \left(2 C^{2}+C +1\right)^{2} \left(C +1\right)^{2}}
\]
}
and
\[
	\frac{\left(C +1\right) \left(3 C^{7}+9 C^{6}+33 C^{5}+59 C^{4}+62 C^{3}+84 C^{2}+34 C +36\right)}{C^{2} \left(2 C^{2}+C +1\right)^{2}}
\]
respectively.

\subsection{Heights 3, 4, 5, and 6}

We can perform the same calculations for the uniform model for heights up to 6 and for the energetic model for heights up to 4. In this subsection we briefly collect the results that fit within these margins. As in the non-probabilistic case, full generating functions can be found in an ancillary file attached to the arXiv version of this article.

For height $3$, the directed graph for the uniform model has 79 vertices before minimization and 30 vertices after, while the directed graph for the energetic model has 1078 vertices before minimization and 225 vertices after. The total degrees of the numerator and denominator of the generating function $f^U_3(x,y)$ for the uniform model are 41 and 43. The expected length is $486414801/25317287 \allowbreak \approx 19.21$, the variance of length is 
	$120539533380801646/640965021040369 \allowbreak \approx 188.06$, the expected displacement is $3013/389 \allowbreak \approx 7.75$ and the variance of displacement is $13442841/302642 \allowbreak \approx 44.42$.
	
Although it takes only a few minutes to construct the directed graph for the energetic model, the linear algebra operations to compute the generating function take several hours. The total degrees of the numerator and denominator (in $x$, $y$, and $C$) are $107$ and $102$. The expectations and variances of length and displacement are rather large rational functions of $C$. For example, the expected length is
\[
	\frac{p(C)}{16 \left(2 C +1\right) \left(C +2\right) \left(C +1\right) \left(C^{2}+1\right)q(C)}
\]
where $p(C)$ is a polynomial of degree $51$ with coefficients up to $19$ digits long and $q(C)$ is a polynomial of degree $45$ with coefficients up to $15$ digits long.

For height $4$ the minimized directed graph for the uniform model has $162$ vertices. The total degrees of the numerator and the denominator of the generating function $f^U_4(x,y)$ are $242$ and $244$. The expected length is 
\[
	\frac{36398762646457399797278201924184927631185786219507599165631529}{1584271961568506676402164217848452838298247680057676502545715} \approx 22.98.
\]
and the expected displacement is $163501511557499670817/20874575314535639327 \approx 7.83$. The variances of length and displacement are approximately $253.71$ and $44.00$ respectively. 


The minimized directed graph for the energetic model has $3387$ vertices, and we are unable to perform the linear algebra operations to obtain $f^E_4(x,y,C)$. We can, however find the specializations $f^E_4(x,1,C)$ and $f^E_4(1,y,C)$. These generating functions are too long to print here, but they can be found in an ancillary file attached to the arXiv version of this article and in the Github repository~\cite{SelfAvoidingStripWalks-repo}.

For height $5$ the minimized directed graph for the uniform model has $1104$ vertices. The full solution $f^U_5(x,y)$ has numerator and denominator with total degrees 1250 and 1252. The specialization function $f_5^U(x,1)$ has numerator and denominator of degrees $1002$ and $1003$ respectively, and we can use it to find that the expected length of a GSAW in this model is $a/b \approx 26.518$ where $a$ and $b$ have $366$ and $364$ digits and the variance of length is $c/d \approx 334.912$ where $c$ and $d$ have $731$ and $728$ digits. The specialization $f^U(1,y)$ has numerator and denominator of degrees $68$ and $69$ respectively, and we can use it to find that the expected displacement of a GSAW in this model is $e/f \approx 8.084$ where $e$ and $f$ have $102$ and $101$ digits and the variance of length is $g/h \approx 45.533$ where $g$ and $h$ have $204$ and $202$ digits.

For heights $6$ and $7$ the minimized directed graphs for the uniform model have  $7294$ and $53,808$ vertices. We are not able to calculate either of the specializations of $f^U_6$ or $f^U_7$. We can provide estimates of the growth rate of the size of the directed graphs as a function of the strip height. For the non-probabilistic model, the number of vertices grows as approximately $3.6^h$ and the number of edges as $7.3^h$, after minimization. For the uniform model, the vertices and edges grow at approximately $6.5^h$ and $11.3^h$. 

In the context of the energetic model, the above results for the mean trapping length apply in the specific case of $C=1$. In the large-$C$ limit, the walks are highly attractive and the trapping length diverges as cavities are unlikely to form. In the limit that $C$ approaches 0, Laforge et al. \cite{laforge:trapping} recently showed numerically that the trapping length on an infinite square lattice will reach a plateau. By simplifying our generating functions with $C=0$ we can derive the trapping plateau at heights 3 through 6, as the plateau does not exist strips of height 2.  We find it is $3867/112\approx34.5$ for $h=3$, and approximately $33.8$ for $h=4$ and $38.9$ for $h=5$, compared to the unconfined square lattice value of approximately $178.5$, or $107.9\pm0.3$ in the quarter-infinite plane. The non-monotonic trapping length with respect to height likely arises from parity effects, and is also observed in Monte Carlo simulations. On the infinite honeycomb lattice, Laforge et al. observed a trapping plateau for attractive walks as well as repulsive, which may be observed in an extension of our work to the honeycomb (or topologically equivalent brickwork) lattice.

\subsection{Extension to Unconfined Walks}
Our goal is to use results from exactly solvable restricted GSAWs to provide insights about trapping in the infinite-grid GSAW. Since our results pertain to walks beginning in the corner of a semi-infinite strip, they are most comparable to the GSAW beginning in the corner of a quarter-infinite square lattice. Results for trapping in such a lattice are not as well-established as those in an infinite square lattice \cite{hemmer:saw-average-71}, so we perform Monte Carlo simulations to determine what the asymptotic limits would be, were we able to compute the expected trapping length as the strip width tends to infinity. Details of and results from the Monte Carlo simulations are found in the Appendix. The pertinent result is that in the quarter-infinite square lattice, the mean trapping length is $N_\infty= 45.3997248 \pm 0.0003198$.

With exact values of the trapping length between $h=2$ and $h=5$, it is difficult to predict exactly what the unconfined trapping length would be. We can surmise that the trapping length would continue to increase with strip height, but would asymptotically approach its unconfined value. This is supported by the Monte Carlo data, which shows the trapping length converging (within statistical uncertainty) to its asymptotic value around $h=50$, although we wish to base conclusions only on exact results. A simple function that would describe such a trend is an exponential decay of the form:
\[
N(h)=N_\infty \left(1-e^{-\kappa h}\right),
\]
where $N_\infty$ and $\kappa$ may be treated as fit parameters. By fitting this function to the mean trapping lengths from 2 to 5, we find that $N_\infty=45.81$ and $\kappa=0.17$. Although two parameters fit to four data points is not a statistically strong measurement, it is encouraging that our predicted $N_\infty$ is so close to the Monte Carlo measurement. We stress that the exponential decay was an ansatz and may not describe the true functional form of $N(h)$. We hope that as exact values are computed for larger strips, we can better understand the approach towards the unconfined case.

\section{Enumerating Maximal Length GSAWs}
\label{section:full-GSAWs}

In April 1999, a user with the name Thomas Womack posted a question on the sci.math newsgroup about what they called \emph{Greek key tours}, which they defined as ``path[s] visiting all squares of an orthogonally-connected chessboard''~\cite{womack:greek-key-tours}. In our terminology, a Greek key tour is a GSAW on a finite $m \times n$ grid graph that starts in the upper-left corner and has the maximum possible length, i.e., it visits every vertex once. The name ``Greek key tour'' presumably comes from the design pattern of the same name, shown for example in Figure~\ref{figure:greek-key}. This post computed the number of Greek key tours on the $3 \times n$ grid for $n$ up to $12$ and conjectured the generating function for the sequence. There are several posts in reply, but it appears that the conjecture was not proven at the time. 

\begin{figure}
	\begin{center}
		\includegraphics[width=0.5\linewidth]{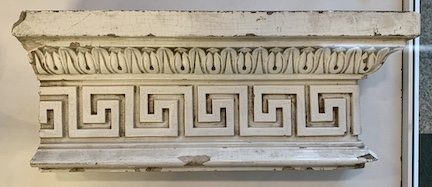}
	\end{center}
	\caption{A stove tile from a house exhibiting the Greek key pattern. Photo credit to the Wikimedia user ``Neoclassicism Enthusiast'' (CC BY-SA 4.0).~\cite{wiki:greek-key-image}}
	\label{figure:greek-key}
\end{figure}

Jensen in 2001, likely unaware of the newsgroup post, published an enumeration of ``maximally compact'' SAWs on a strip, also termed Hamiltonian paths on strips, with numerical estimates of their connective constants~\cite{jensen:compact-saws}. Later, in 2009, Nathaniel Johnston, likely unaware of Jensen's work, wrote a nice blog post on the topic~\cite{johnston:greek-key-blog}. In addition to the recurrence for $3 \times n$ Greek key tours, he points out that a conjecture for the $4 \times n$ Greek key tours can be found under OEIS sequence A046995. For $5 \times n$ Greek key tours, Johnston says ``It is now clear that this problem grows very large very quickly, and proceeding in this manner may not be (realistically) feasible.'' 

Let $g_{k,n}$ be the number of Greek key tours on the $k \times n$ grid graph and let $G_k(x) = \sum_n g_{k,n}x^n$. Using three small modifications of our techniques from previous sections, we can prove that $G_k(x)$ is rational for all $k$ and calculate $G_k(x)$ for $3 \leq k \leq 8$.

\textbf{Modification 1:} Since Greek key tours must visit every vertex, when constructing the directed graph to enumerate them and determining the valid extensions of each frame, we must only allow extensions that use every vertex in the new column. 

\textbf{Modification 2:} In previous sections we enumerated GSAWs on half-infinite grid graphs, and so in the construction of the directed graph for GSAWs, any accepting states were required to have a single segment and the last point in that segment could not be in the rightmost column (because then it would not be trapped, as the vertex to its right would still be open). Here we are counting Greek key tours on finite graphs, and therefore this condition on accepting states is not necessary. 

\textbf{Modification 3:} We are only interested in keeping track of the displacement of each tour, as its length can be determined from its displacement, and so in the directed graph we can omit the powers of $x$ in the weights of the transitions and keep only $y$.

With these two changes made, we can use the same procedure to build directed walks enumerating $g_{k,n}$ for fixed $k$ and then compute the corresponding generating functions.

\begin{theorem}
	For any $k \geq 1$, the generating function $G_k(x)$ that counts Greek key tours with height $k$ is rational.
\end{theorem}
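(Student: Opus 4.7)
The plan is to mirror the construction of Section~\ref{section:fsm-non-prob}. For each fixed height $k$, I would build a directed weighted graph $D_k^{\text{GK}}$ whose walks from a designated start vertex to accepting vertices are in weight-preserving bijection with Greek key tours on $k \times n$ grids, with the weight $x^{n-1}$ recording the grid's width. Rationality of $G_k(x)$ then follows from the transfer matrix method exactly as in Theorem~\ref{theorem:bijection} and the opening of Section~\ref{section:non-prob-results}.

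The vertex set of $D_k^{\text{GK}}$ consists of frames as in Subsection~\ref{subsection:frames}, and the edges are determined by the extension procedure of Subsection~\ref{subsection:non-prob-construction} with the three modifications listed above. Specifically, Modification~1 discards any extended frame in which some vertex of the newly introduced rightmost column is left uncovered by the chosen segments; Modification~2 takes the accepting states to be frames consisting of a single segment covering all $2k$ vertices of its two columns, with no trapping requirement; and Modification~3 assigns each transition the weight $x$ rather than $x^{(\text{new edges})} y$, since every Greek key tour on a $k \times n$ grid has length exactly $kn - 1$ so that length is determined by $n$. The start vertex is an auxiliary vertex with weight-$x$ edges into each frame whose first segment begins at $(0, k-1)$, whose segments together cover both columns, and whose segment/path ordering is realizable as the prefix of some tour.

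The bijection argument follows Theorem~\ref{theorem:bijection} closely. Overlaying successive frames of a walk $F_1 \to \cdots \to F_{n-1}$ in $D_k^{\text{GK}}$ yields a set of directed edges on the $k \times n$ grid that, by the segment-ordering data and the single-segment accepting condition, assembles into a single directed path starting at $(0, k-1)$; Modification~1 at every transition, together with the fully-covered accepting frame, guarantees that this path visits every vertex, so it is a Greek key tour. Conversely, any Greek key tour decomposes into a walk in $D_k^{\text{GK}}$ via the same column-by-column slicing used in Theorem~\ref{theorem:bijection}, and this walk is valid because the tour's full coverage of each column automatically enforces Modification~1 at every transition. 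Injectivity and surjectivity follow from the same arguments as before, and the transfer matrix method produces
\[
    G_k(x) = \sum_{t \in A} (I - M)^{-1}_{s, t},
\]
which is rational in $x$.

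The principal obstacle is the careful handling of Modification~1 in the presence of the original frame definition, which permits singleton segments only in the terminal position. A vertex of a Greek key tour can legitimately appear as an isolated vertex in a frame only if it is the tour's endpoint and its unique tour edge points rightward out of the frame; any non-endpoint vertex has degree~$2$ in the tour and, since only one of its incident tour edges can land in the column to the right, its second tour edge must lie within the current frame, guaranteeing it belongs to a nontrivial segment. Verifying this case analysis, together with the analogous conditions for the initial frame, is the crux of the adaptation; once it is checked, every other step of the original construction in Subsection~\ref{subsection:non-prob-construction} transfers directly.
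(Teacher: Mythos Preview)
Your proposal is correct and follows essentially the same approach as the paper: apply the three listed modifications to the finite state machine construction of Section~\ref{section:fsm-non-prob}, observe that the bijection of Theorem~\ref{theorem:bijection} carries over, and conclude rationality via the transfer matrix method. You actually supply more detail than the paper itself, which simply states the modifications and then the theorem; your case analysis for singleton vertices under Modification~1 is a useful consistency check that the paper leaves implicit.
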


The directed graphs we obtain are smaller than their counterparts from previous sections because many frames are no longer valid. The number of vertices in the minimized graphs for $k=3,\ldots,8$ is $8, 17, 37, 122, 263, 1065$.

\textbf{Height 3.} The generating function $G_3(x)$ for Greek key tours on height $3$ grid graphs is
\[
	G_3(x) = \frac{1-x-x^2+4x^3-x^4}{(1-2x)(1-2x^2)} = 1 + x + 3x^2 + 8x^3 + 17x^4 + 38x^5 + \cdots,
\]
confirming the claim in Womack's sci.math posting. This sequence is OEIS A046994. From the generating function we can derive that the exponential growth rate is $2$.

\textbf{Height 4.} For height 4 Greek key tours, we find
\begin{align*}
	G_4(x) &= \frac{1-2 x-2 x^{2}+11 x^{3}+4 x^{4}-10 x^{5}+4 x^{6}+2 x^{7}-3 x^{8}+x^{9}}{(1-x-3 x^{2}-x^{3}+x^{4})(1-2 x-2 x^{2}+2 x^{3}-x^{4})}\\
	&= 1+x +4 x^{2}+17 x^{3}+52 x^{4}+160 x^{5}+\ldots.
\end{align*}
confirming the conjectures of Dean Hickerson and Maksym Voznyy on OEIS sequence A046995. The exponential growth rate of the sequence is the reciprocal of the root of $1-2 x-2 x^{2}+2 x^{3}-x^{4}$ near $0.3939$, which is approximately $2.539$.

\textbf{Height 5.} For height 5 Greek key tours, we find
\begin{footnotesize}\begin{align*}
	G_5(x) &= \frac{1-3 x-13 x^{2}+58 x^{3}+25 x^{4}-162 x^{5}+63 x^{6}+113 x^{7}-61 x^{8}+12 x^{9}-11 x^{10}+25 x^{11}-17 x^{12}+5 x^{13}-3 x^{14}}{(1+x)(1-11 x^{2}-2 x^{6})(1-5 x+2 x^{2}+8 x^{3}-8 x^{4}+x^{5}-x^{6})}\\
	&= 1+x +5 x^{2}+38 x^{3}+160 x^{4}+824 x^{5}+\ldots
\end{align*}\end{footnotesize}{}%
confirming the conjecture of Colin Barker on OEIS sequence A145156. The exponential growth rate of the sequence is the reciprocal of the root of $1-5 x+2 x^{2}+8 x^{3}-8 x^{4}+x^{5}-x^{6}$ near $0.240$, which is approximately $4.167$. If $a(n)$ is the coefficient of $x^n$, then the linear recurrence for $a(n)$ is 
\begin{align*}
	a(n) &= 4 a(n - 1) + 14 a(n - 2) -54 a(n - 3) -33 a(n - 4) + 117 a(n - 5) + 2 a(n - 6)\\
	& \qquad  -84 a(n - 7) -6 a(n - 8) + 9 a(n - 9) -14 a(n - 11) -2 a(n - 13).
\end{align*}

\textbf{Height 6.} For height 6 Greek key tours, we find
\[
	G_6(x) = \frac{p(x)}{q_1(x)q_2(x)}
\]
where
\begin{align*}
	p(x) &= 1-6 x-34 x^{2}+264 x^{3}+366 x^{4}-3994 x^{5}+20 x^{6}+27007 x^{7}-19019 x^{8}-87139 x^{9}\\
			& \qquad +102484 x^{10}+130568 x^{11}-238632 x^{12}-50245 x^{13}+313267 x^{14}-117646 x^{15}\\
			& \qquad -284585 x^{16}+227721 x^{17}+227700 x^{18}-229923 x^{19}-186998 x^{20}+172233 x^{21}\\
			& \qquad +151742 x^{22}-131197 x^{23}-70563 x^{24}+87071 x^{25}+6034 x^{26}-35150 x^{27}+4755 x^{28}\\
			& \qquad +11858 x^{29}-2596 x^{30}-5759 x^{31}+1344 x^{32}+1351 x^{33}-425 x^{34}+14 x^{35}+47 x^{36}\\
			& \qquad -14 x^{37}+2 x^{38},\\
	q_1(x) &= 1-5 x-14 x^{2}+63 x^{3}-12 x^{4}-90 x^{5}+35 x^{6}+66 x^{7}-118 x^{8}+8 x^{9}+82 x^{10}-42 x^{11}\\
			& \qquad -28 x^{12}+4 x^{13}-2 x^{14}\\
	q_2(x) &= 1-2 x-29 x^{2}+25 x^{3}+237 x^{4}-156 x^{5}-794 x^{6}+330 x^{7}+1368 x^{8}-89 x^{9}-1325 x^{10}\\
			& \qquad -248 x^{11}+881 x^{12}+67 x^{13}-613 x^{14}-18 x^{15}+208 x^{16}+17 x^{17}-36 x^{18}+25 x^{19}\\
			& \qquad +35 x^{20}-3 x^{21}-4 x^{22}+x^{23}.
\end{align*}

This is sequence A160240 in the OEIS. The exponential growth rate of the sequence is the reciprocal of the root of $q_1(x)$ near $0.177$, which is approximately $5.652$. The linear recurrence for $a(n)$ involves terms from $a(n-1)$ to $a(n-37)$.

\textbf{Height 7.} For height 7 Greek key tours, we find
\[
	G_7(x) = \frac{p(x)}{q_1(x)q_2(x)}
\]
where $p$, $q_1$, and $q_2$ have degrees $72$, $35$, and $36$ respectively. The counting sequence is found in OEIS entry A160241. The exponential growth rate of the sequence is the reciprocal of the root of the denominator near $0.112$, which is approximately $8.909$. The linear recurrence for $a(n)$ involves terms from $a(n-1)$ to $a(n-71)$.

\textbf{Height 8.} For height 8 Greek key tours, we find
\[
	G_8(x) = \frac{p(x)}{q_1(x)q_2(x)}
\]
where $p$, $q_1$, and $q_2$ have degrees $203$, $66$, and $137$ respectively. We have added this to the OEIS as sequence A374307. The exponential growth rate of the sequence is the reciprocal of the root of the denominator near $ 0.0808$, which is approximately $ 12.382$. The linear recurrence for $a(n)$ involves terms from $a(n-1)$ to $a(n-203)$.

Data for the sizes of the directed graphs are found in Table 1. The growth rates of the number of vertices and edges in the minimized directed graphs are approximately $2.6^h$ and $1.4^h$ respectively.

\section{Concluding Remarks}
\label{section:conclusion}

\begin{table}
\centering
	\begin{tblr}{
		colspec = {l Q[r,m] Q[r,m] Q[r,m] Q[r,m] Q[r,m] Q[r,m] Q[r,m] Q[r,m]},
	    row{1} = {font=\bfseries, c},
	    cell{3-1}{1-1} = {halign=c, valign=b},
	    cell{1}{2} = {c=8}{c},
	}
		\toprule
		& Number of States in FSM Constructions \\
		\cmidrule{2-9}
			&
			\SetCell[c=2]{r}{non-probabilistic}
			&&			
			\SetCell[c=2]{r}{uniform model}
			&&			
			\SetCell[c=2]{r}{energetic model}
			&&			
			\SetCell[c=2]{r}{Greek key tours}\\
			$k$
			& $v$ & $e$ & $v$ & $e$ & $v$ & $e$ & $v$ & $e$ \\
		\midrule
		2	&	12 & 27 				& 12 & 27		& 54 & 123 			& 8 & 18		\\
			& 	6 & 11					& 7 & 14		& 24 & 59 			& 5 & 9		\\
		3	&	79 & 299 				& 79 & 299		& 1,078 & 3,996		& 32 & 96		\\
			& 	14 & 49					& 30 & 121 		& 225 & 965			& 8 & 22 \\
		4	&	706 & 4,537 			& 709 & 4,537	& 33,154 & 200,280 	& 213 & 848		\\
			&	41 & 304				& 162 & 1,163 	& 3,387 & 23,627 	& 17 & 78\\
		5	&	7201 & 78,408			& 7,201 & 78,408			&&		& 1,469 & 8,145		\\
			&	152 & 2,142				& 1,104 & 12,878 			&&		& 37 & 245 \\
		6	&	80,378 & 1,518,388		& 80,378 & 1,518,388		&&		& 14,696 & 120,562\\
			&	573 & 17,144			& 7,294 & 150,963 			&&		& 122 & 1,391\\
		7	&	954,791 & 31,488,152	& 954,791 & 31,488,152		&&		& 118,072 & 1,408,025\\
			& 	2,311 & 134,486			& 53,808 & 1,943,659 		&&		& 263 & 4,732\\
		8	&& 							&&	 						&&		& 1,446,249 & 26,229,823\\
			&&	 						&&	 						&&		& 1,065 & 31,627\\
		\bottomrule
	\end{tblr}
	\caption{The number of vertices and edges in the directed graphs constructed to enumerate various types of GSAWs. For each value of $k$, the upper row shows the numbers of vertices ($v$) and edges ($e$) before minimization, and the lower row shows these values after minimization.}
	\label{table:num-states}
\end{table}

We have partially extended the work of Klotz and Sullivan~\cite{klotz:gsaws1} by describing and implementing an automatic method to compute generating functions for GSAWs under the non-probabilistic model and two different probabilistic models. Table~\ref{table:num-states} shows the number of vertices and edges before and after minimizations for the combinatorial finite state machines that we have constructed here. These finite state machines have been used to exactly compute the mean trapping lengths and spans of growing self-avoiding walks in confined square lattices, summarized in Section 2. The exact trapping lengths in strips of heights 2 to 5 can be used to estimate an asymptotic value for trapping in the quarter-infinite plane that is comparable to that estimated from Monte Carlo simulations.

The quarter-infinite plane, with a mean trapping length of approximately 45, is a simpler case than the infinite square lattice with the famous mean trapping length of 71. While Klotz and Sullivan consider GSAWs on the grid graph $\GG_{\Z \times \{0, 1\}}$ that is infinite in both directions, we have only tackled half-infinite grid graphs $\GG_{\N \times \{0, \ldots, h-1\}}$. Our methods here can certainly be extended to the fully infinite case, but doing so would require an approach that builds frames in both the positive and negative directions simultaneously, or otherwise describes a method to ``glue together'' two partial GSAWs on half-infinite grid graphs in all valid ways. Klotz and Sullivan also derive results for other height-two graphs based on the triangular lattice, which we have not attempted here. We believe it is possible to use a more general approach than ours to derive the generating functions for GSAWs on many graphs that have a regular construction and whose height is constrained in some way.

\bibliographystyle{alpha}
\bibliography{paper.bib}

\appendix
\section{Monte Carlo Estimations}
\label{subsection:monte-carlo}

Our goal is to use results from exactly solvable restricted GSAWs to provide insights about trapping in the infinite-grid GSAW. Since our results pertain to walks beginning in the corner of a semi-infinite strip, they are most comparable to the GSAW beginning in the corner of a quarter-infinite square lattice. Results for trapping in such a lattice are not as well-established as those in an infinite square lattice, so we perform Monte Carlo simulations to determine what the asymptotic limits would be, were we able to compute the expected trapping length as the strip width tends to infinity.

Hemmer and Hemmer~\cite{hemmer:saw-average-71} use Monte Carlo simulation to predict that the expected length of a GSAW in the uniform model on the infinite square lattice (the grid graph $\GG_{\Z\times \Z}$) is $70.7 \pm 0.2$. In this section we report on our own Monte Carlo simulations for GSAWs in the uniform model on the square lattice, the half plane $\GG_{\N \times \Z}$, the quarter plane $\GG_{\N \times \N}$, as well as the half-infinite strips $\GG_h$ for $1 \leq h \leq 100$.

Our results in the main text prove that the expected length of a GSAW on $\GG_5$ in the uniform model is $\approx 26.51828485079$, and so we can use this result to test the accuracy of the confidence intervals produced by our Monte Carlo simulation. To do this, we repeated the following steps 10,000 times: generate one million GSAWs on $\GG_5$, use their lengths to compute confidence intervals for the expected length at the 50\%, 80\%, 90\%, 95\%, 99\%, and 99.99\% levels, and check whether the known true expected length is in each interval. We performed the same calculation for expected displacement as well. Table~\ref{table:conf-int-acc} records how often the true expected length and displacement were within the confidence intervals.

\begin{table}[h!]
\centering
	\begin{tblr}{
		colspec = {Q[1in,halign=c,valign=m] Q[2in,halign=c,valign=m] Q[2in,halign=c,valign=m]},
		row{1} = {font=\bfseries},
	    cell{1-2}{1} = {c=3}{c},
	}
		\toprule
		\SetCell{h,5in} Accuracy of Confidence Intervals for the Expected Length and Displacement of a GSAW on \bm{$\GG_5$} \\
		10,000 trials, each with 1,000,000 GSAWs\\
		\cmidrule{1-9}
			confidence level
			&
			\% of intervals containing the true expected length
			&
			\% of intervals containing the true expected displacement\\
		\midrule
		50\% & 50.43\% & 50.19\%\\
		80\% & 80.06\% & 79.99\%\\
		90\% & 90.09\% & 89.98\%\\
		95\% & 94.91\% & 95.18\%\\
		99\% & 99.02\% & 99.19\%\\
		99.99\% & 100.00\% & 100.00\%\\
		\bottomrule
	\end{tblr}
	\caption{Results of a Monte Carlo experiment to test the accuracy of confidence intervals at various levels against rigorously known results.}
	\label{table:conf-int-acc}
\end{table}

We then simulated 100 billion GSAWs each on the full plane, half plane, and quarter plane. For each graph, the starting point is $(0,0)$ and displacement is the difference between the maximum and minimum $x$-values reached by the walk. These results are found in Table~\ref{table:inf-gsaws}.

\begin{table}[h!]
\centering
	\begin{tblr}{
		colspec = {l Q[c,m] Q[c,m]},
	    row{1} = {font=\bfseries, c},
	    cell{1-2}{1} = {c=3}{c},
	}
		\toprule
		Expected Length and Displacement on Infinite Grid Graphs \\
		100 billion trials each, 99\% confidence intervals\\
		\midrule
			& expected length & expected displacement\\
		\midrule
		quarter plane & $45.3997248 \pm 0.0003198$ & $9.0996860 \pm 0.0000605$ \\
		half plane & $54.5329760 \pm 0.0003615$ & $11.9745362 \pm 0.0000660$ \\
		full plane & $70.7592964 \pm 0.0004116$ & $13.2641570 \pm 0.0000687$ \\
		\bottomrule
	\end{tblr}
	\caption{99\% confidence intervals for the expected length and displacement of GSAWs on several infinite grid graphs.}
	\label{table:inf-gsaws}
\end{table}

Finally, we simulated the expected length and displacement for each half-infinite strip $\GG_h$ with $1 \leq h \leq 100$. The values for $h \leq 5$ are known rigorously from the previous section. Table~\ref{table:strip-estimates} lists 99\% confidence intervals for $6 \leq h \leq 15$ and Figures~\ref{figure:length-graph} and \ref{figure:displacement-graph} show plots of the expected lengths and displacement against $1/h$. The horizontal lines in each graph show the estimated expected length and displacement for quarter plane walks.

\begin{table}[h!]
\centering
	\begin{tblr}{
		colspec = {l Q[c,m] Q[c,m]},
	    row{1} = {font=\bfseries, c},
	    row{4-7} = {font=\bfseries},
	    cell{1-2}{1} = {c=3}{c},
	}
		\toprule
		Expected Length and Displacement on $\bm{\GG_h}$ \\
		1 billion trials each, 99\% confidence intervals\\
		\midrule
			$h$ & expected length & expected displacement\\
		\midrule
		2 & 13 & 7 \\
		3 & 19.2127537600 & 7.7455012853 \\
		4 & 22.9750721653 & 7.8325670867 \\
		5 & 26.5182848508 & 8.0835861225 \\
		6 & $29.3528055 \pm 0.0025016$ & $8.2585576 \pm 0.0008403$\\
		7 & $31.7204199 \pm 0.0027267$ & $8.4039552 \pm 0.0008491$\\
		8 & $33.6971732 \pm 0.0029285$ & $8.5216546 \pm 0.0008569$\\
		9 & $35.3572211 \pm 0.0031094$ & $8.6167338 \pm 0.0008634$\\
		10 & $36.7626069 \pm 0.0032729$ & $8.6955012 \pm 0.0008692$\\
		11 & $37.9547103 \pm 0.0034203$ & $8.7599674 \pm 0.0008742$\\
		12 & $38.9747431 \pm 0.0035544$ & $8.8140591 \pm 0.0008788$\\
		13 & $39.8434124 \pm 0.0036753$ & $8.8583201 \pm 0.0008828$\\
		14 & $40.5888354 \pm 0.0037849$ & $8.8951760 \pm 0.0008862$\\
		15 & $41.2320529 \pm 0.0038847$ & $8.9268546 \pm 0.0008894$\\
		\bottomrule
	\end{tblr}
	\caption{99\% confidence intervals for the expected length and displacement of GSAWs on several infinite grid graphs. Rows for $2 \leq h \leq 5$, in bold are rigorous results from the previous sections, rounded to 10 decimal places.}
	\label{table:strip-estimates}
\end{table}

\begin{figure}
	\begin{center}
		\includegraphics[width=6in]{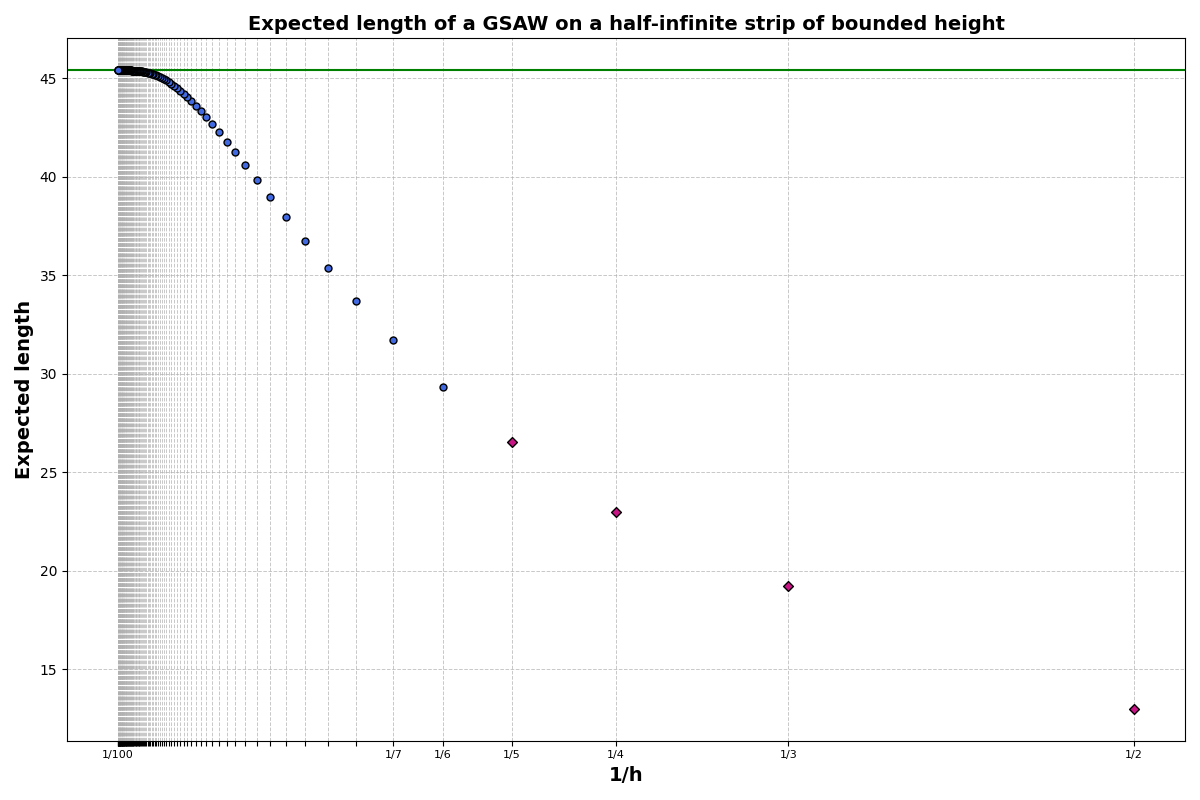}
		
		\includegraphics[width=6in]{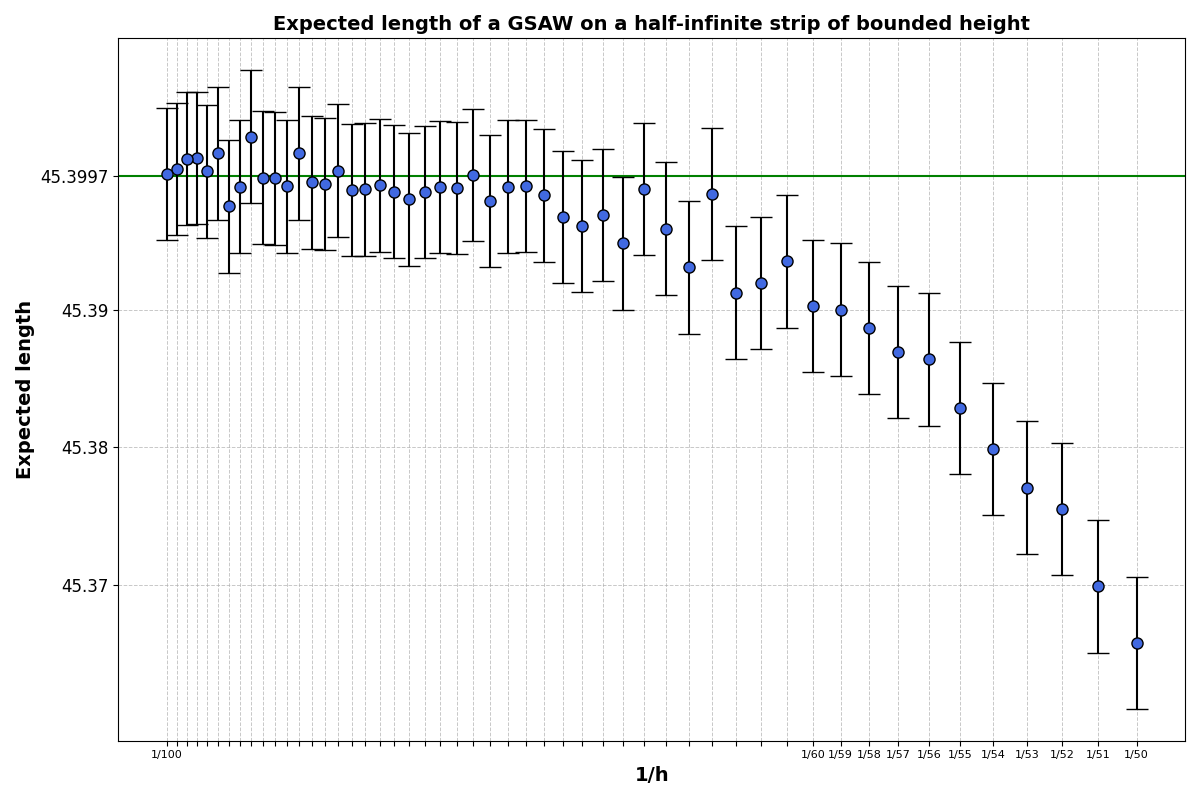}
	\end{center}
	\label{figure:length-graph}
	\caption{Monte Carlo estimates with 99\% confidence intervals for the expected length of a GSAW on a half-infinite strip of bounded height. Points plotted as a diamond are exact values determined in the previous sections. The bottom figure shows only $50 \leq h \leq 100$ with error bars corresponding to the 99\% confidence interval.}
\end{figure}

\begin{figure}
	\begin{center}
		\includegraphics[width=6in]{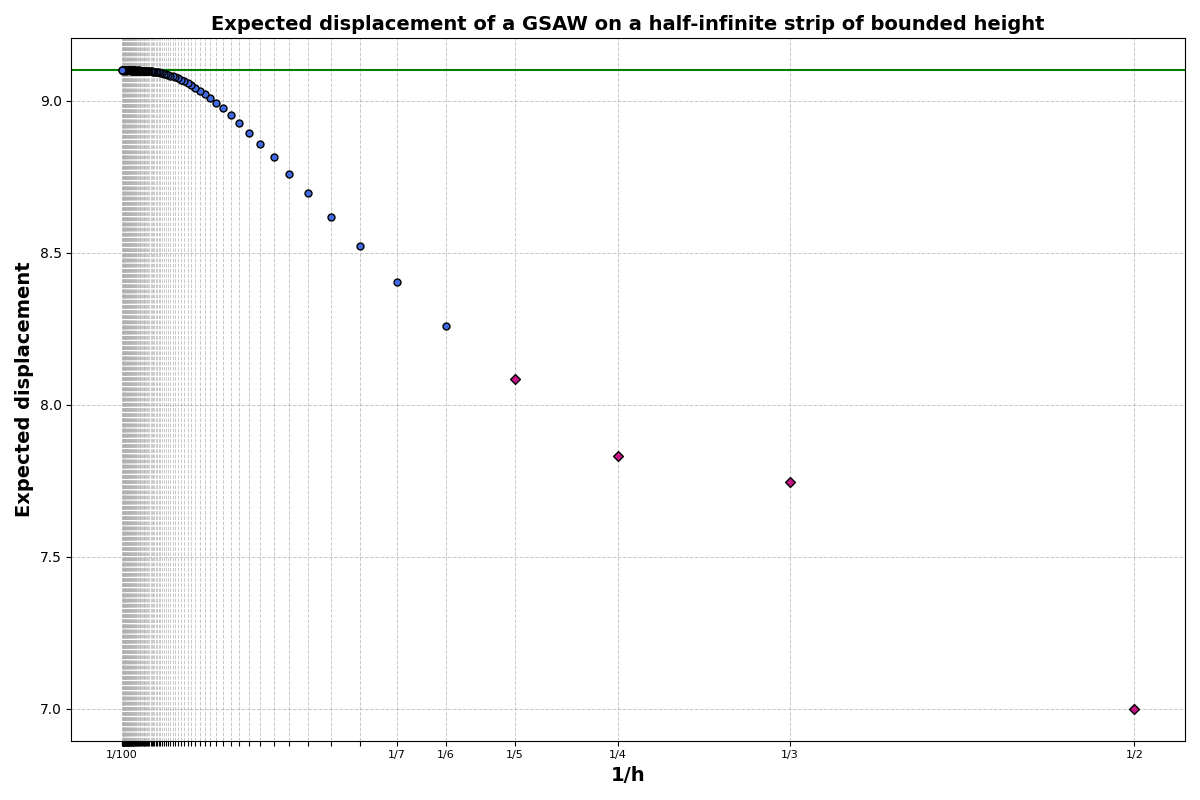}
		
		\includegraphics[width=6in]{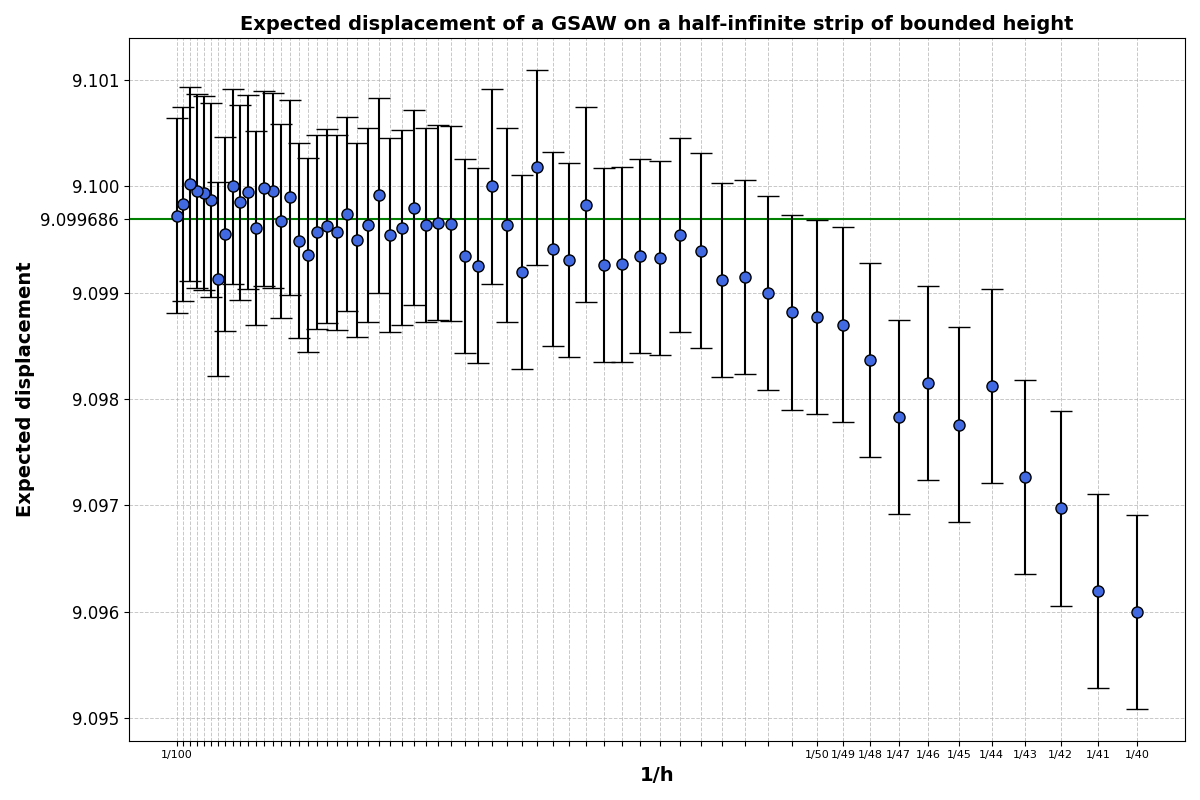}
	\end{center}
	\label{figure:displacement-graph}
	\caption{Monte Carlo estimates with 99\% confidence intervals for the expected displacement of a GSAW on a half-infinite strip of bounded height. Points plotted as a diamond are exact values determined in the previous sections. The bottom figure shows only $40 \leq h \leq 100$ with error bars corresponding to the 99\% confidence interval.}
\end{figure}

\end{document}